\newcommand{\ignore}[1]{}
\numberwithin{figure}{section}
\numberwithin{table}{section}
\newcommand\aint{{\int \hspace{- 10pt}- \hspace{- 5 pt}}}
\newcommand\tr{\operatorname{tr}}
\newcommand\grad{\operatorname{grad}}
\newcommand\supp{\operatorname{supp}}
\renewcommand\S{{\mathcal S}}
\newcommand\R{\mathbb{R}}
\newcommand\eps{\operatorname\epsilon}
\newcommand\B{{\mathcal B}}
\newcommand\I{{\mathcal I}}
\renewcommand\P{{\mathcal P}}
\newcommand\T{{\mathcal T}}
\newcommand{\0}{\mathaccent23}
\numberwithin{equation}{section}
\newtheorem{thm}{Theorem}[section]
\newtheorem{lem}[thm]{Lemma}
\newtheorem{cor}[thm]{Corollary}
\begin{document}

\title[The bubble transform]{The bubble transform: a new tool for
  analysis of finite element methods}
\author{Richard S. Falk}
\address{Department of Mathematics,
Rutgers University, Piscataway, NJ 08854}
\email{falk@math.rutgers.edu}
\urladdr{http://www.math.rutgers.edu/\char'176falk/}
\thanks{}
\author{Ragnar Winther}
\address{Department of Mathematics,
University of Oslo, 0316 Oslo, Norway}
\email{rwinther@math.uio.no}
\urladdr{http://heim.ifi.uio.no/\char'176rwinther/}
\thanks{}
\subjclass[2000]{Primary: 65N30}
\keywords{simplicial mesh, local decomposition of $H^1$, preservation of piecewise polynomial
spaces}
\date{December 2, 2013}
\thanks{The work of the second author was supported by the Norwegian
Research Council.}

\begin{abstract}
The purpose of this paper is to discuss the construction of a linear
operator, referred to as the bubble transform, which maps scalar
functions defined on $\Omega \subset \R^n$ into a collection of
functions with local support. In fact, for a given simplicial
triangulation $\T$ of $\Omega$, the associated bubble transform
$\B_{\T}$ produces a decomposition of functions on $\Omega$ into a sum
of functions with support on the corresponding macroelements.  The
transform is bounded in both $L^2$ and the Sobolev space $H^1$, it is
local,
and it preserves the
corresponding continuous piecewise polynomial spaces.  As a
consequence, this transform is a useful tool for constructing local
projection operators into finite element spaces such that the appropriate
operator norms are bounded independently of polynomial degree. The
transform is basically constructed by two families of operators, local
averaging operators and rational trace preserving cut--off operators.
\end{abstract}

\maketitle
\section{Introduction}\label{intro}
Let $\Omega$ be a bounded polyhedral domain in $\R^n$ and $\T$
a simplicial triangulation of $\Omega$. The purpose of this paper is to
construct a decomposition of scalar functions on $\Omega $ into a sum
of functions with local support with respect to the triangulation
$\T$. The decomposition is defined by a linear map $\B
= \B_{\T}$, referred to as the bubble transform, which maps the
Sobolev space $H^1(\Omega)$ boundedly into a direct sum of local
spaces of the form $\0 H^1(\Omega_f)$, where $f$ runs over all the
subsimplexes of $\T$ and $\Omega_f$ denotes appropriate macroelements
associated to $f$. More precisely,
\[
\B = \sum_{f\in\Delta(\T)} B_f : H^1(\Omega) \to
     \sum_{f\in\Delta(\T)} \0H^1(\Omega_f),
\]
where the maps $\B_f : H^1(\Omega) \to \0H^1(\Omega_f)$
are local and bounded linear maps with the property that for all values
of $r\ge 1$, if $u$ is a continuous piecewise polynomial of degree
at most $r$ with respect to the triangulation $\T$, then $B_f u$ is a
continuous piecewise polynomial of degree at most $r$ with respect
to the restriction of the triangulation to $\Omega_f$.  Thus the map
$\B$ is independent of a particular polynomial degree $r$ and so does
not depend on a particular finite element space.

To motivate the construction of the bubble transform, let us recall that
the construction of projection operators is a key tool for deriving
stability results and convergence estimates for various finite element
methods. In particular, for the analysis of mixed finite element
methods, projection operators which commute with differential
operators have been a central feature since the beginning of such
analysis, cf. \cite{Brezzi, BrezziFortin}. Another setting where
such operators potentially would be very useful, but hard to
construct, is the analysis of the so-called
$p$-version of the finite element method, i.e., in the
setting where we are
interested in convergence properties as the polynomial degree of the
finite element spaces increases.
For such investigations,
the construction of projection operators which admit uniform bounds
with respect to polynomial degree represents a main challenge.
In fact, so far such constructions have appeared to be
substantially more difficult than the more standard analysis of the
finite element method, where the focus is on convergence with
respect to mesh refinement.

Pioneering results on the convergence of
the $p$-method applied to second order elliptic problems in two space
dimensions
were derived by Babu\v{s}ka and Suri \cite{MR899702}. An important
ingredient in their analysis was the construction of a
polynomial preserving extension operator. A generalization
of the construction to three space dimensions in the tetrahedral case
can be found in
\cite{MR1445738}, while the importance of such extension operators for
the Maxwell
equations was argued in \cite{MR2034876}.
Further developments of commuting extension
operators for the de Rham complex in three space dimensions are for
example presented
in \cite{MR2105164,MR2439500,MR2551195,MR2904580}.
These constructions have been used to establish a number of convergence
results
for the $p$-method, not only for boundary value problems, but also
for eigenvalue problems \cite{MR2764424}.
A crucial step in this analysis is the use of so--called projection
based interpolation operators,
cf. \cite[Chapter 3]{MR2459075} and \cite{MR2034876,MR2105164,hiptmair-2009}.
However, this development has not led to local projection operators
which are uniformly bounded in the appropriate Sobolev norms.
Some extra regularity seems to be necessary, cf. \cite[Section
6]{MR2764424}
or \cite[Section
4]{hiptmair-2009},
and, as a consequence,
the theory for the
$p$-method is far more technical than the corresponding theory for the
$h$-method.
This complexity represents a main obstacle for
generalizing the theory for the $p$-method
in various directions. The bubble transform introduced in this paper
represents a new tool which will be useful to overcome
some of these difficulties. In particular, the construction of
projection 
operators onto the
spaces of
continuous piecewise polynomials, which are uniformly bounded
in $H^1$ with respect to the polynomial degree,  is an immediate  consequence.

In practical computations,
improved accuracy is often achieved by
combining increased polynomial degree and mesh refinement, an
approach frequently referred to as the $hp$-finite element method.
However, for simplicity, throughout this paper we consider
the triangulation $\T$ to be fixed.
Although the discussion in this paper is restricted to scalar valued functions,
it will be convenient to use some of the notation defined for the more
general situation of the de Rham complex and differential forms
in \cite{acta,bulletin}.
In particular, we let $\Delta_j(\T)$ denote the set of subsimplexes
of dimension $j$ of the triangulation $\T$,
while
\[
\Delta(\T) = \bigcup_{j=0}^n  \Delta_j(\T)
\]
is the set of all subsimplexes.
Furthermore,
the space
$\P_r\Lambda^0(\T) \subset H^1(\Omega)$ is the space of continuous
piecewise polynomials of degree $r$ with respect to the triangulation
$\T$.
We recall that the spaces $\P_r\Lambda^0(\T)$ admit degrees of freedom
of the form
\begin{equation}\label{DOF}
\int_f u \, \eta, \quad \eta
\in \P_{r-1-\dim f}(f),\,  f \in \Delta(\T),
\end{equation}
where $\P_j(f)$ denotes the set of polynomials of degree $j$ on $f$.
These degrees of freedom
uniquely determine an element in $\P_r\Lambda^0(\T)$. In fact, the
degrees of freedom associated to a given simplex $f \in \Delta(\T)$
uniquely determine elements in $\0\P_r(f)$, the space of polynomials of
degree
$r$ on $f$ which vanish on the boundary
$\partial f$.

For each $f \in \Delta(\T)$, we let $\Omega_f$ be the macroelement
consisting of the union of the elements of $\T$ containing $f$, i.e.,
\[
\Omega_f = \bigcup \{T \, | \, T \in \T, \, f \in \Delta(T) \, \},
\]
while $\T_f$ is the restriction of the triangulation $\T$ to
$\Omega_f$.  It is a consequence of the properties of the degrees of
freedom that for each $f \in \Delta(\T)$, there exists an extension
operator $E_f : \0\P_r(f) \to \0\P_r\Lambda^0(\T_f)$. Here,
$\0\P_r\Lambda^0(\T_f)$ consists of all functions in
$\P_r\Lambda^0(\T_f)$ which are identically zero on $\Omega \setminus
\Omega_f$.  Furthermore, the space $\P_r\Lambda^0(\T)$ admits a direct
sum decomposition of the form
\begin{equation}\label{decomp1}
\P_r\Lambda^0(\T) = \bigoplus_{f \in
  \Delta(\T)}E_f(\0\P_r(f)) \subset \bigoplus_{f \in
  \Delta(\T)} \0\P_r\Lambda^0(\T_f).
\end{equation}
The extension operators $E_f$ introduced above, defined from the
degrees of freedom, will depend on the space $\P_r\Lambda^0(\T)$. In
particular, they depend on the polynomial degree $r$. However, it is a
key observation that the macroelements $\Omega_f$ only depend on the
triangulation $\T$, and not on $r$. So for all $r$, there exists a
decomposition of the space $\P_r\Lambda^0(\T)$ of the form
\eqref{decomp1}, i.e., into a direct sum of local spaces
$\0\P_r\Lambda^0(\T_f)$.  Furthermore, {\it the geometric structure}
of these decompositions, represented by the simplexes $f \in
\Delta(\T)$ and the associated macroelements $\Omega_f$, is
independent of $r$, and this indicates that a corresponding
decomposition may also exist for the space $H^1(\Omega)$ itself. More
precisely, the ansatz is a decomposition of $H^1(\Omega)$ of the form
$H^1(\Omega) = \bigoplus_f \0 H^1(\Omega_f)$.  The
bubble transform, $\B = \B_{\T}$, which we will introduce below,
produces such a decomposition. As noted above, the transform is a
bounded linear operator
\[
\B : H^1(\Omega) \to \bigoplus_{f \in
  \Delta(\T)} \0 H^1(\Omega_f)
\]
that preserves the piecewise
polynomial spaces of \eqref{decomp1} in the sense that if $u \in
\P_r\Lambda^0(\T)$, then each component of the transform, $B_fu$, is in
$\0\P_r\Lambda^0(\T_f) \subset \0 H^1(\Omega_f)$. In fact, $\B$ is also bounded in $L^2$.
The transform depends on the given
triangulation $\T$, but there is no finite element space present in
the construction.

We should note that once the transformation $\B$ is shown to exist,
the construction of local and uniformly bounded projections onto the spaces
$\P_r\Lambda^0(\T)$, with a bound independent of $r$,  is
  straightforward.
We just project each component  $B_fu \in \0 H^1(\Omega_f)$
by a local projection into the
subspace $\0\P_r\Lambda^0(\T_f)$. Since each local projection can be
chosen to have norm equal to one, the global operator mapping $u$ to
the local projections of $B_fu$ will be
bounded independently of the degree $r$. Furthermore, this process will
lead to a projection operator since the transform preserves continuous
piecewise
polynomials.

In fact, unisolvent degrees of freedom,
generalizing  \eqref{DOF},  exist for all the finite element spaces of
differential forms, referred to as $\P_r\Lambda^k(\T)$ and
$\P_r^-\Lambda^k(\T)$ and
studied in \cite{acta,bulletin}. As long as the triangulation $\T$ is
fixed, all these spaces admit degrees of freedom with a  common
geometric structure, independent of the polynomial degree $r$.
Therefore, for all these spaces there
exist degrees of freedom generalizing \eqref{DOF}, and local
decompositions similar to \eqref{decomp1}. So far these
decompositions have been utilized to derive basis functions in the
general setting, cf. \cite{decomp}, and to construct canonical, but
unbounded, local projections \cite[Section 5.2]{acta}. By combining
these canonical projections with appropriate smoothing operators,
bounded, but nonlocal projections which commute with the exterior
derivative
were also constructed in \cite{MR2373181,schoberl05} and 
\cite[Section 5.4]{acta}.
Furthermore, in \cite{local-cochain} local decompositions and a double
complex structure were the main tools to obtain
local and bounded
cochain projections
for the spaces $\P_r\Lambda^k(\T)$ and
$\P_r^-\Lambda^k(\T)$.
However, none of the projections just described
will admit bounds which are independent of the polynomial
degree
$r$, while the construction of projections with such
bounds is almost immediate from the properties of the bubble transform,
cf. Section~\ref{projections} below.
Therefore, it is our ambition to generalize the construction of
the bubble transform given below
to differential forms in any dimension, such that the transform
is bounded in the appropriate Sobolev norms, it commutes
with the exterior derivative, and it preserves the finite element
spaces $\P_r\Lambda^k(\T)$ and
$\P_r^-\Lambda^k(\T)$.
However, in the rest of this paper we restrict the discussion to
$0$-forms, i.e., to ordinary scalar valued functions defined on
$\Omega \subset \R^n$.

The present paper is organized as follows. In Section~\ref{prelim} we
present the main properties of the transform and introduce some useful
notation. The key tools needed for the construction are introduced in
Section~\ref{tools}. The main results of the paper are derived in
Section~\ref{main}. However, the verification of some of the more
technical estimates are delayed until Section~\ref{technical}.

\section{Preliminaries}\label{prelim}
We will use $H^1(\Omega)$ to denote the Sobolev space of all
functions  $L^2(\Omega)$ which also have the components of the
gradient in $L^2$, and $\| \cdot \|_1$ is the corresponding norm.
If $\Omega' \subset \Omega$, then $\| \cdot \|_{1,\Omega'}$
denotes the $H^1$ norm with respect to $\Omega'$.
The corresponding notation for the $L^2$-norms are $\| \cdot \|_0$
and $\| \cdot \|_{0,\Omega'}$.
Furthermore, if $\Omega_f$ is a macroelement
associated to $f \in \Delta(\T)$, then
\[
\0 H^1(\Omega_f) = \{ v \in H^1(\Omega_f)\, | \, \0 E_fv \in H^1(\Omega)
\, \},
\]
where $\0 E_f : L^2(\Omega_f) \to L^2(\Omega)$ denotes the the
extension by zero outside $\Omega_f$.
For any $f \in \Delta(\T)$,
$\Delta(f)$
is the set of subsimplexes of $f$.
In addition to the macroelements $\Omega_f$, we also introduce the
extended macroelements, $\Omega_f^e$, given by
\[
\Omega_f^e
  = \cup\{\Omega_g \, | \, g \in \Delta_0(\T) \, \}.
\]
It is a simple observation that if $g \in \Delta(f)$ then
$\Omega_g \supset \Omega_f$, while $\Omega_g^e \subset \Omega_f^e$.

\subsection{An overview of the construction}
The construction of the transformation $\B$ will be done inductively
with respect to the dimension of $f \in \Delta(\T)$.
We are
seeking a decomposition of the space  $H^1(\Omega)$
with properties similar to \eqref{decomp1}.
More precisely, we will establish that
any function $u \in H^1(\Omega)$ can be decomposed into
a sum, $u = \sum_f u_f$, where each component $u_f \in \0
H^1(\Omega_f)$. The map $u \mapsto u_f$ will be denoted $B_f$,
and the collection of all these maps can be seen as
a linear transformation $\B = \B_{\T} : H^1(\Omega)
\to \bigoplus_{f \in \Delta(\T)}\0 H^1(\Omega_f)$ with the following properties:
\begin{itemize}
\item[(i)] $u = \sum_f B_f u$, where the component map $B_f$
is a local operator mapping
  $H^1(\Omega_f^e)$ to $\0 H^1(\Omega_f)$.
\item[(ii)] $\B$ is bounded, i.e., there is a constant $c$, depending
  on the triangulation $\T$,  such that
\[
\sum_f \|B_f u \|_{1,\Omega_f}^2 \le c \|u \|_1^2, \quad u \in H^1(\Omega).
\]
\item[(iii)] $\B$ preserves the piecewise polynomial spaces in the sense that
\[
u \in \P_r\Lambda^0(\T) \Longrightarrow B_fu \in \0\P_r\Lambda^0(\T_f).
\]
\end{itemize}
In the special case when $n=1$ and $\Omega$ is an interval, say $\Omega =
(0,1)$, a transform with the properties above is easy to construct. In this
case, $\T$ is simply a partition of the form
\[
0= x_0 < x_1< \ldots <x_N = 1.
\]
The set $\Delta_0(\T)$ is the set of vertices $\{x_j\}$, while
$\Delta_1(\T)$ is the set of intervals of the form $(x_{j-1},x_j)$.
If $f = x_j \in \Delta_0(\T)$, then $\Omega_f = (x_{j-1},x_{j+1})$,
with an obvious modification near the boundary, while
$\Omega_f = f$ for $f \in \Delta_1(\T)$.
Let $\lambda_i \in \P_1\Lambda^0(\T)$ be the  standard piecewise linear ``hat
functions,'' characterized by $\lambda_i(x_j) = \delta_{i,j}$. For all $f
= x_j\in \Delta_0(\T)$, we let $B_f u = u(x_j)\lambda_j$. By construction,
$B_f u $ has support in $\Omega_f$. Furthermore, the function
\[
u^1 = u - \sum_{f \in \Delta_0(\T)} B_fu
\]
vanishes at all the vertices $x_j$. Therefore, if we let $B_f u =
u^1|_f$
for all $f \in \Delta_1(\T)$, then $B_f u \in \0 H^1(\Omega_f)$, and
$u = \sum_{f \in \Delta(\T)} B_fu$. In fact, it is straightforward to
check that all the properties (i)--(iii) hold for this construction.

In general, for $n>1$,  $\tr_f u$, for $f \in \Delta(\T)$,
will not be well defined
for $u \in H^1(\Omega)$. Therefore, the simple construction above
cannot be directly generalized to higher dimensions.
For example, when $f$ is the vertex $x_0$, to define $B_f u$,
we introduce the average of $u$
\[
U(x) = \frac{1}{|\Omega_f|} \int_{\Omega_f} u(\lambda_0(x) + [1 - \lambda_0(x)]
y) \, dy,
\]
where $\lambda_0(x)$ is now the $n$-dimensional piecewise linear function
equal to one at $x_0$ and zero at all other vertices.
Note that if $u$ is well-defined at $x_0$, then $U(x_0) = u(x_0)$,
while if $x\in \Omega\setminus\Omega_f$, then
$U(x)$ is just the average of $u$ over $\Omega_f$.  In general, for
$x \neq x_0$, $U(x)$ has pointwise values. Note that $U(x)$ depends only
on $\lambda_0(x)$, so is constant on level sets of $\lambda_0(x)$.

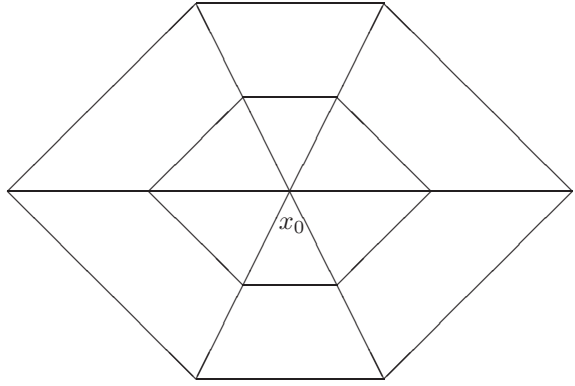
\begin{figure}[htb]
\setlength{\unitlength}{0.5cm}
\centering
\begin{picture}(15,6)
\put(0,0){\line(1,0){15}}
\put(0,0){\line(1,1){5}}
\put(0,0){\line(1,-1){5}}
\put(5,5){\line(1,0){5}}
\put(5,-5){\line(1,0){5}}
\put(15,0){\line(-1,1){5}}
\put(15,0){\line(-1,-1){5}}
\put(7.5,0){\line(-1,2){2.5}}
\put(7.5,0){\line(1,2){2.5}}
\put(7.5,0){\line(-1,-2){2.5}}
\put(7.5,0){\line(1,-2){2.5}}
\put(3.75,0){\line(1,1){2.5}}
\put(6.25,2.5){\line(1,0){2.5}}
\put(3.75,0){\line(1,-1){2.5}}
\put(6.25,-2.5){\line(1,0){2.5}}
\put(7.2,-1){$x_0$}
\put(11.25,0){\line(-1,1){2.5}}
\put(11.25,0){\line(-1,-1){2.5}}
\end{picture}
\vskip1in
\caption{The level set $\lambda_0(x)=1/2$ in the macroelement $\Omega_{x_0}$.}
\label{fig:levelset}
\end{figure}

In fact, if we replace $\lambda_0(x)$ by a variable $\lambda$ taking
values in $[0,1]$ in the definition of $U(x)$ above, then we may view
$U$ as a function of $\lambda$, which we will call $(A_f u)(\lambda)$.
Hence, $(A_f u)(\lambda_0(x)) = U(x)$.
It is easy to check that if $u$ is
a piecewise polynomial in $x$, then $A_f u$ is a polynomial in $\lambda$.
Finally, if we define
\begin{equation}\label{Bdef-prelim}
(B_f u)(x) = (A_f u)(\lambda_0(x)) - [1 - \lambda_0(x)] (A_f u)(0),
\end{equation}
then $B_f u$ will have support on $\Omega_f$.

For simplices $f$ of higher dimension, the operators $B_f$ will be
constructed recursively by a process of the form
\[
B_fu = C_f(u - \sum_{\stackrel{g \in \Delta(\T)}{\dim g < \dim
    f}} B_gu),
\]
where $C_f$ is a local trace preserving cut--off operator, i.e., designed
such that $C_fv$ is close to $v$ near $f$, but at the same time $C_fv$
vanishes outside $\Omega_f$. To also have $C_fv$ in $H^1$ will in general
require compatibility
conditions of $v$ on $\partial f \subset \partial \Omega_f$.
We will return to the precise definition of the operators $B_f$ and
$C_f$
in Section~\ref{main} below.


\subsection{Barycentric coordinates}
If $x_j \in \Delta_0(\T)$ is a vertex, then $\lambda_j(x) \in \P_1(\T)$ is
the corresponding barycentric coordinate, extended by zero outside
the corresponding macroelement. If $f \in \Delta_m(\T)$ has vertices
$x_0, x_1, \ldots, x_m$, then we write $[x_0,x_1, \ldots ,x_m]$ to
denote convex combinations, i.e.,
\[
f= [x_0,x_1, \ldots ,x_m] = \{\,  x = \sum_{j=0}^m \alpha_jx_j \, | \,
\sum_j \alpha_j =1, \, \alpha_j \ge 0 \, \}.
\]
The corresponding vector field $(\lambda_0, \lambda_1, \ldots ,
\lambda_m)$ with values in $\R^{m+1}$
is denoted $\lambda_f$.
Hence, the map $x \mapsto \lambda_f(x)$, restricted to $f$,
is a one-one map of $f$ onto $\S_m$, where
\[
\S_m = \{\, \lambda = (\lambda_0, \ldots ,\lambda_m) \in \R^{m+1} \, | \,
\sum_{j=0}^m \lambda_j = 1, \, \lambda_j \ge 0 \, \}.
\]
To the simplex $\S_m$ we associate the
simplex $\S_m^c = [\S_m,0]$,  given by
\[
\S_m^c = \{\, \lambda = (\lambda_0, \ldots ,\lambda_m) \in \R^{m+1} \, | \,
\sum_{j=0}^m \lambda_j \le 1, \, \lambda_j \ge 0 \, \}.
\]
Hence, $\S_m$ is an $m$ dimensional subsimplex of $\S_m^c$.
For $\lambda \in \S_m^c$, we define
\[
b(\lambda) = b_m(\lambda) = 1 - \sum_{j=0}^m \lambda_j,
\]
i.e., corresponding to the barycentric coordinate of the origin.

If $f = [x_0, x_1, \ldots ,x_m] \in \Delta_m(\T)$, then the
macroelements $\Omega_f$ and $\Omega_f^e$ are given by
\[
\Omega_f  = \bigcap_{j= 0}^{m} \Omega_{x_j} \quad \text{and }
\Omega_f^e = \bigcup_{j= 0}^{m} \Omega_{x_j}.
\]
The map
$x \mapsto \lambda_f(x)$ maps $\Omega$ to $\S_m^c$, $f$ to $\S_m$,
and the boundary $\partial \Omega_f$ to $\partial \S_m^c \setminus \S_m$,
cf. Figure~\ref{fig:map}.

\begin{figure}[htb]
\setlength{\unitlength}{0.35cm}
\centering
\begin{picture}(30,15)
\linethickness{2pt}
\put(5,0){\line(0,1){10}}
\linethickness{1pt}
\put(5,0){\line(2,1){10}}
\put(5,10){\line(2,-1){10}}
\put(5,0){\line(-3,4){5.15}}
\put(5,10){\line(-3,-2){5.15}}
\put(20,0){\vector(0,1){11}}
\put(20,0){\vector(1,0){11}}
\put(31.5,0){$\lambda_0$}
\put(19.8,11.5){$\lambda_1$}
\linethickness{2pt}
\qbezier(20,10)(25,5)(30,0)
\linethickness{1pt}
\put(7,3.5){$x$}
\put(5.1,5){$f$}
\put(4.8,-.6){$x_0$}
\put(4.8,10.5){$x_1$}
\put(7.5,10){$\Omega_f$}
\put(23,3.5){$\lambda_f(x)$}
\put(22.5,8){$\S_1$}
\put(28,10){$\S_1^C$}
\qbezier(7,3)(15,9)(23,3)
\put(22.4,3.7){\vector(1,-1){1}}
\end{picture}
\caption{The map $x \mapsto \lambda_f(x)$ for $n=2$ and $m=1$}
\label{fig:map}
\end{figure}
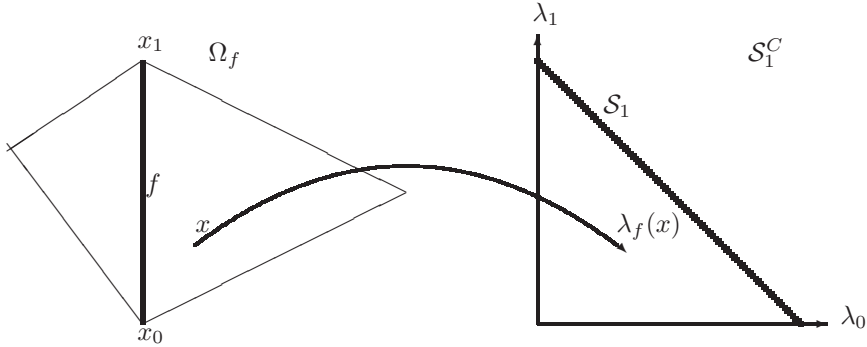

In particular, $\Omega \setminus \Omega_f^e$
is mapped to the origin.
For each $f = [x_0, x_1, \ldots ,x_m] \in \Delta_m(\T)$
we also introduce the piecewise linear function $\rho_f$ on $\Omega$ by
\[
\rho_f(x)= 1 - \sum_{j=0}^m \lambda_j(x) = b(\lambda_f(x)).
\]
As a consequence, the simplex $f$ can be characterized as
the null set of $\rho_f$, while $\rho_f \equiv 1$ on $\Omega \setminus
\Omega_f^e$.

For each integer $m \ge 0$, we let $\I_m$ be the
set of all subindexes of $(0,1,\ldots ,m)$, i.e., $\I_m$
corresponds to all subsets of  $\{0,1,\ldots ,m \}$.
In particular, we count the
empty set as an element of $\I_m$, such that $\I_m$ is a finite set with
$2^{m+1}$
elements. We will use $|I|$ to denote the cardinality of $I$.
If $ 0 \le i \le m$ is an integer,
then there are exactly $2^{m}$ elements of $\I_m$ which contain
$i$, and $2^{m}$ elements which do not contain
$i$. For any $I \in \I_m$, we define $P_I : \S_m^c \to \S_m^c$ by
\[
(P_I\lambda)_i =
\left\{ \begin{array}{ll} 0, \quad i \in I,\\
      \lambda_i, \quad i \notin I. \end{array}
\right.
\]
Hence if $I$ is nonempty, then $P_I$ maps the simplex $\S_m^c$ to
a portion of its boundary.
In particular, if $I =\{0,1,\ldots ,m \}$, then  $P_I$ maps $\S_m^c$ into
the origin of $\R^{m+1}$,
while $P_I$ is the identity if $I$ is the empty set.
Finally, for any $f \in \Delta_m(\T)$ and $I \in \I_m$ we let $f(I) \in
\Delta(f)$ denote the corresponding subsimplex of $f$ given by
$f(I) = \{ x \in f \, | \, P_I\lambda_f(x) = \lambda_f(x) \, \}.$
Hence, if $I$ is the empty set, then $f(I) = f$, while $f(I)$ is
the the empty subsimplex of $f$ if $I = (0,1, \ldots ,m) \in \I_m$.

\section{Tools for the construction}\label{tools}
The key tools for the construction are two families of operators,
referred to as trace preserving cut--off operators and
local averaging operators.

\subsection{The trace preserving cut off operator on $\S_m^c$}\label{trace-preserve}
Let $w$ be a real valued function defined on $\S_m^c$. For the
discussion in this section, we will assume that $w$ is
sufficiently regular to justify the operations below in a pointwise
sense.  We will introduce an operator $K= K_m$ which maps such
functions $w$ into a new function on $\S_m^c$, with the property that
the trace on $\S_m$ is preserved, but such that the trace of $K_mw$
vanishes on the rest of the boundary of $\S_m^c$.  In fact, the
operator $K_m$ resembles the extension operators discussed in
\cite{MR2439500}. However, in the present setting, where we will be
working with functions which may not have a trace on $\S_m$, trace
preserving operators seem to be a more useful concept.  The operator
$K_m$ can be viewed as a sum of pullbacks, weighted by rational
coefficients.  However, the operator $K_m$ preserves polynomials in an
appropriate sense, cf. Lemma~\ref{pol-preserve-E} below.  The operator
$K_m$ is defined by
\[
K_mw(\lambda) = \sum_{I \in \I_m}(-1)^{|I|}K_m^Iw
= \sum_{I \in \I_m}(-1)^{|I|}\frac{b(\lambda)}
{b(P_I\lambda)}w(P_I\lambda), \quad \lambda \in \S_m^c.
\]
When $m=0$, the set $\I_0$ has only two elements, the empty set and
$(0)$.
Therefore, the function $K_0$ maps functions $w= w(\lambda)$, defined
on $\S_0^c = [0,1]$,
to
\[
K_0w(\lambda) = w(\lambda) - (1- \lambda)w(0),
\]
such that \eqref{Bdef-prelim} can be rewritten as $B_fu = (K_0 \circ A_f)u(\lambda_0(\cdot))$.
We observe that $K_0w(1) = w(1)$, $K_0w(0) = 0$, and if $w \in \P_r$ then $K_0w
\in \P_r$.
Formally, we can also argue that $\tr_{\S_m}(w -K_mw) = 0$ for $m$
greater than zero.
This just follows since all the terms in the sum defining $K_m$, except
for the one corresponding to $I = \emptyset$,  i.e., $I$ is the
emptyset, have vanishing trace on $\S_m$ due to the appearance of the term
$b(\lambda)$ in the nominator.
A corresponding argument also shows that the trace of $K_mw$
vanishes on the rest of the boundary of $\S_m^c$. Recall that the
boundary of $\S_m^c$ consists of $\S_m$ and the subsimplexes
\[
\S_{m,i}= \{ \lambda \in \S_m^c \, |\, \lambda_i = 0 \, \} \quad
i=0,1, \ldots ,m.
\]
Furthermore, for a fixed $i$, let $I \in \I_m$ be any index such that
$i \notin I$, and let $I' \in \I_m$ be given as $I'= I \cup \{i
\}$. For
$\lambda \in \S_{m,i}$ we have $P_{I'}\lambda = P_I\lambda$, and therefore
\[
K_m^Iw(\lambda) - K_m^{I'}w(\lambda) = \frac{b(\lambda)}
{b(P_I\lambda)}w(P_I\lambda)
- \frac{b(\lambda) }{b(P_{I'}\lambda)}w(P_{I'}\lambda)= 0.
\]
However, for a fixed $i$ the set $\I_m$ is exactly equal to the union
of indexes of the form $I$ and $I'$. As a consequence, we conclude
that $K_mw$ is identically zero on $\S_{m,i}$, and hence on $\partial
\S_m^c \setminus \S_m$. In particular, $K_mw$ is zero at the origin.

The operator $K_m$ preserves  polynomials in the following sense.

\begin{lem}\label{pol-preserve-E}
Assume that $w \in \P_r(\S_m^c)$ with $\tr_{\S_m} w \in \0\P_r(\S_m)$.
Then $K_m w \in \P_r(\S_m^c)$, $\tr_{\S_m}(K_mw -w) = 0$, and
$\tr_{\partial \S_m^c\setminus \S_m} K_mw = 0$.
\end{lem}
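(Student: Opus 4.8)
The plan is to exploit the explicit formula
\[
K_mw(\lambda) = \sum_{I \in \I_m}(-1)^{|I|}\frac{b(\lambda)}{b(P_I\lambda)}\,w(P_I\lambda)
\]
and the fact that the last two assertions ($\tr_{\S_m}(K_mw-w)=0$ and $\tr_{\partial\S_m^c\setminus\S_m}K_mw=0$) have already been verified in the running text preceding the lemma; so the only genuinely new claim is that $K_mw$ is a polynomial of degree at most $r$ on $\S_m^c$ whenever $w\in\P_r(\S_m^c)$ with $\tr_{\S_m}w\in\0\P_r(\S_m)$. The obstacle is that each individual summand $b(\lambda)w(P_I\lambda)/b(P_I\lambda)$ is a genuine rational function, not a polynomial, because $b(P_I\lambda)$ is a nonconstant linear form that does not divide $w(P_I\lambda)$ in general; polynomiality can only emerge after the signed sum is assembled. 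So the heart of the argument is a divisibility/cancellation statement.

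First I would set up notation: for a subset $J\subseteq\{0,\dots,m\}$ write $b_J(\lambda) = 1 - \sum_{i\notin J}\lambda_i = b(P_J\lambda)$, so $b_\emptyset = b$ and $b_{\{0,\dots,m\}}\equiv 1$; note $b_J$ depends only on the variables $\lambda_i$ with $i\notin J$, and $P_J\lambda$ has those same coordinates. The key algebraic fact I would isolate and prove is: for each fixed $J$, grouping the terms of $K_mw$ according to which coordinates are zeroed, one can reorganize the sum so that $b(\lambda)$ is cancelled. Concretely, I would use the hypothesis $\tr_{\S_m}w\in\0\P_r(\S_m)$, i.e. $w$ vanishes on $\partial\S_m$ within $\S_m$, which says that $w(\lambda)$, restricted to the hyperplane $b(\lambda)=0$, vanishes whenever additionally some $\lambda_i=0$. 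Equivalently, writing $w$ in the variables $(\lambda_0,\dots,\lambda_m)$, for every nonempty $I$ the polynomial $w(P_I\lambda)$ — which lives on the face where the coordinates in $I$ vanish — is divisible by $b(P_I\lambda)$ once we are on $\S_m$... more precisely the right statement is that $b(P_I\lambda)$ divides $b(\lambda)w(P_I\lambda) - (\text{suitable lower terms})$; I would make this precise by an induction on $m$.

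Hence the clean route is induction on $m$. For $m=0$ the text already records $K_0w(\lambda)=w(\lambda)-(1-\lambda)w(0)$, which is manifestly in $\P_r$ (the hypothesis forces nothing extra here, or forces $w(0)=0$ if one insists on the trace condition, either way a polynomial). For the inductive step I would split $\I_m = \{I : m\notin I\}\cup\{I : m\in I\}$. The terms with $m\notin I$ reassemble, after factoring, into $b(\lambda)/b_{\ldots}$ times an expression recognizable as $K_{m-1}$ applied to the partial evaluation $w(\lambda_0,\dots,\lambda_{m-1},\cdot)$ or to a trace of $w$; the terms with $m\in I$ reassemble into $\lambda_m$ or $b(\lambda)$ times a similar $K_{m-1}$-type expression on the face $\lambda_m = 0$. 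Using the induction hypothesis, each reassembled block is a polynomial, and the spurious denominators have been eliminated because the boundary-vanishing hypothesis on $\tr_{\S_m}w$ is exactly what is inherited by the lower-dimensional traces that feed into $K_{m-1}$. Collecting the blocks gives $K_mw\in\P_r(\S_m^c)$, and since the reassembly only involves multiplication by the linear forms $\lambda_m$ and $b(\lambda)$ and substitution, no degree is gained. I expect the bookkeeping of which trace-vanishing conditions propagate to the $K_{m-1}$ calls to be the delicate point; everything else is routine manipulation of the defining sum.
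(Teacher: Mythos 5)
Your proposal misidentifies where the difficulty lies, and this derails the argument. You assert that each summand $b(\lambda)w(P_I\lambda)/b(P_I\lambda)$ ``is a genuine rational function, not a polynomial, because $b(P_I\lambda)$ \dots\ does not divide $w(P_I\lambda)$ in general,'' and that polynomiality ``can only emerge after the signed sum is assembled.'' Under the hypothesis of the lemma this is false, and recognizing that it is false is the entire content of the paper's proof: for every nonempty proper $I$, the linear form $b(P_I\lambda)$ \emph{does} divide $w(P_I\lambda)$. Indeed, $w(P_I\lambda)$ is a polynomial in the variables $\lambda_j$, $j\notin I$, and at any point of $\S_m^c$ with $b(P_I\lambda)=0$ the image $P_I\lambda$ lies in the boundary face of $\S_m$ where the $I$-coordinates vanish; since $\tr_{\S_m}w\in\0\P_r(\S_m)$ vanishes on $\partial\S_m$, the polynomial $w\circ P_I$ vanishes on a relatively open subset of the hyperplane $\{b(P_I\lambda)=0\}$, hence on all of it, so $w(P_I\lambda)=b(P_I\lambda)\,w'(P_I\lambda)$ with $w'\in\P_{r-1}$. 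Each term $K_m^Iw=b(\lambda)w'(P_I\lambda)$ is therefore individually a polynomial of degree at most $r$ (the cases $I=\emptyset$ and $I=\{0,1,\dots,m\}$ being trivial), and no cancellation between terms is needed.

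You do brush against this fact (``is divisible by $b(P_I\lambda)$ once we are on $\S_m$\dots'') but then retreat to the weaker claim that $b(P_I\lambda)$ divides $b(\lambda)w(P_I\lambda)$ minus unspecified ``lower terms,'' to be handled by an induction on $m$ whose reassembly into $K_{m-1}$-blocks is described only schematically and whose crucial bookkeeping you yourself flag as unresolved. As written that induction does not go through cleanly — for instance $b_m(\lambda)=b_{m-1}(\lambda_0,\dots,\lambda_{m-1})-\lambda_m$, so the terms with $m\in I$ do not literally reassemble into $K_{m-1}$ applied to a trace of $w$ — and in any case it is unnecessary. Your deferral of the two trace identities to the discussion preceding the lemma is legitimate and matches the paper, but the polynomiality claim, which you correctly single out as the only new content, is left without a proof; this is a genuine gap.
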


\begin{proof}
Assume that $w \in \P_r(\S_m^c)$, such that $\tr_{\S_m}w$ vanishes on the
boundary of $\S_m$.  To show that  $K_m w \in \P_r(\S_m^c)$, we
consider each term in the sum defining $K_mw$ of the form
\[
K_m^Iw(\lambda) := \frac{b(\lambda)}
{b(P_I\lambda)}w(P_I\lambda).
\]
If $I = \emptyset$, then $K_m^Iw = w$, while if $I$ is the maximum set,
$I = (0,1,\ldots ,m)$,
then $K_m^Iw(\lambda) = b(\lambda)w(0,\ldots ,0)$ which is linear.
Therefore, it is enough to consider the other choices of $I$,
i.e., when $K_m^Iw$ has an essential rational coefficient
$b(\lambda)/b(P_I\lambda)$.

Note that since $\tr_{\S_m}w$ vanishes on the
boundary of $\S_m$,
we can conclude that $w(P_I\lambda)$ vanishes on
$\{\lambda \in \S_m^c\, |\, b(P_I \lambda) = 0 \, \}$.
This means that $w(P_I\lambda)$ must be of the form
$w(P_I\lambda) = b(P_I\lambda)w'(P_I\lambda)$,
where $w' \in \P_{r-1}(\S_{m,I})$. Here
\[
\S_{m,I} = \{ \lambda \in \S_m^c \, |\, P_I\lambda = \lambda \, \}.
\]
As a consequence,
$K_m^Iw = b(\lambda)w'(P_I\lambda ) \in \P_r(\S_m^c)$.
Furthermore, $\tr_{\S_m}K_mw = \tr_{\S_m} w$ since
all the terms $K_m^Iw$ have vanishing trace on $\S_m$, except for
the one corresponding to $I = \emptyset$. Finally,
the property that the trace of $K_mw$ vanishes on the rest of the
boundary of $\S_m^c$ follows from the discussion
given above.
\end{proof}

\subsection{The local averaging operator}
Throughout this section we will assume that
$f = [x_0,x_1, \ldots ,x_m] \in \Delta_m(\T)$, where
we assume that $0 \le m < n$.
For $v \in L^2(\Omega_f)$
and $\lambda \in \S_m^c$, we let  $A_fv(\lambda)$
be given by
\[
A_fv(\lambda) = \aint_{\Omega_f} v(y + \sum_{j=0}^m \lambda_j(x_j -y))\, dy,
\]
where the slash through an integral means an average, i.e.,
$\int_{\Omega_f} \hspace{- 18pt}- \quad$ should be interpreted as
$|\Omega_f|^{-1}\int_{\Omega_f}$.
If $\lambda \in \S_m$, then the integrand is independent of
$y$,
and therefore $A_fv(\lambda) = v(x)$, where $x = \sum_j \lambda_jx_j
\in f$. Hence, at least formally, the operator $\lambda_f^* \circ
A_f$, which is given by
$v \mapsto A_fv(\lambda_f(\cdot))$. is the identity operator on $f$.
We will find it convenient to introduce the function
$G = G_m: \S_m^c\times \Omega_f \to \Omega_f$ given by
\[
G_m(\lambda,y) = y + \sum_{j=0}^m \lambda_j(x_j -y) = \sum_{j=0}^m
\lambda_j x_j + b(\lambda)y, \quad
\lambda \in \S_m^c, \, y \in \Omega_f,
\]
so that the operator $A_f$ can be expressed as
\[
A_fv(\lambda) = \aint_{\Omega_f} v(G_m(\lambda,y))\, dy =
|\Omega_f|^{-1}\sum_{T \in \T_f}\int_{T} v(G_m(\lambda,y))\, dy.
\]
In fact, we observe that for each $y \in \Omega_f$, the map
$G_m(\cdot, y)$ maps $\S_m^c$ to $\Omega_f$, and the operator $A_f$
is simply the average with respect to $y$ of the pullbacks with
respect to these maps.
It is a property of the map $G_m$ that if $y \in T$, where $ T \in
\T_f$, then $G_m(\lambda,y) \in T$. In fact, $G_m(\lambda,y)$ is a convex combination of
$y$ and $b(\lambda)^{-1}\sum_i\lambda_ix_i \in f$.

A key property of the operator $A_f$ is that it maps
the piecewise polynomial spaces $\P_r\Lambda^0(\T_f)$ into
the polynomial spaces $\P_r(\S_m^c)$.

\begin{lem}\label{pol-preserve-A}
If $v \in \P_r\Lambda^0(\T)$, then $A_f v \in \P_r(\S_m^c)$.
Furthermore,  if $\lambda \in \S_m$,
then $A_f v(\lambda) = v(x)$, where $x = \sum_{j=0}^m \lambda_jx_j
\in f$.
\end{lem}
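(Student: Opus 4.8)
The plan is to verify the two claimed properties of $A_f$ separately, starting with the polynomial preservation, which is the substantive part, and then noting that the trace identity follows from an observation already made in the text.

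First I would fix $T \in \T_f$ and analyze the single integral $\int_T v(G_m(\lambda,y))\,dy$ for $v$ a polynomial on $T$. The key structural fact, already recorded above, is that $G_m(\lambda,y) = \sum_j \lambda_j x_j + b(\lambda) y$ is \emph{affine} in $y$ for each fixed $\lambda$, and its coefficients depend on $\lambda$ polynomially — indeed each component of $G_m(\lambda,y)$ is a polynomial of degree one in $\lambda$ with coefficients that are affine in $y$. Therefore, if $v \in \P_r(T)$, the composition $v(G_m(\lambda,y))$ is, for fixed $y$, a polynomial in $\lambda$ of degree at most $r$, and its coefficients are polynomials in $y$ of degree at most $r$. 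Integrating a polynomial in $y$ over the fixed simplex $T$ produces a constant, so $y \mapsto \int_T v(G_m(\lambda,y))\,dy$ is a polynomial in $\lambda$ of degree at most $r$ on all of $\S_m^c$. Summing over $T \in \T_f$ and dividing by $|\Omega_f|$, as in the displayed formula for $A_f$, shows $A_f v \in \P_r(\S_m^c)$. One subtlety to address carefully: since $v \in \P_r\Lambda^0(\T)$ is only piecewise polynomial, the argument must be run simplex-by-simplex as above, and the point is that each piece contributes a genuine polynomial in $\lambda$ defined on the whole of $\S_m^c$ (not merely on a subregion), so the finite sum is again a polynomial on $\S_m^c$; there is no matching-across-faces issue because we never restrict the $\lambda$-domain.

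For the second assertion, I would simply invoke the observation made when $A_f$ was introduced: if $\lambda \in \S_m$ then $b(\lambda) = 1 - \sum_j \lambda_j = 0$, so $G_m(\lambda,y) = \sum_{j=0}^m \lambda_j x_j =: x$ is independent of $y$, whence the integrand is the constant $v(x)$ and its average over $\Omega_f$ equals $v(x)$. Here $x = \sum_j \lambda_j x_j \in f$ because $\lambda \in \S_m$ means the $\lambda_j$ are nonnegative and sum to one. Strictly, $v$ must be evaluated pointwise at $x \in f$; since $v \in \P_r\Lambda^0(\T)$ is continuous, this is unambiguous even when $x$ lies on an interface between simplices of $\T_f$.

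The main obstacle, such as it is, is bookkeeping rather than conceptual: one must be careful that "degree at most $r$ in $\lambda$" is preserved under the composition $v \circ G_m$. Because $G_m$ is degree one in $\lambda$, composing with a degree-$r$ polynomial $v$ gives degree at most $r$ in $\lambda$ — this is the crux, and it hinges entirely on the factorization $G_m(\lambda,y) = \sum_j \lambda_j x_j + b(\lambda)y$ with $b(\lambda)$ itself affine. I expect the write-up to spend most of its length making this composition-degree claim precise and confirming that integration in $y$ over each fixed $T$ does not raise the $\lambda$-degree, which is immediate since $y$ and $\lambda$ are independent variables.
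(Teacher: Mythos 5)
Your argument follows the same route as the paper's proof: work simplex by simplex over $T\in\T_f$, use that $G_m(\lambda,y)$ is of degree one in $\lambda$ so that composition with a degree-$r$ polynomial gives a polynomial of degree at most $r$ in $\lambda$, observe that averaging in $y$ does not affect the $\lambda$-degree, and handle the trace identity on $\S_m$ by noting $b(\lambda)=0$ there. That is exactly the paper's proof of the lemma.

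There is, however, one point where your justification addresses the wrong concern. When you replace $v$ by the polynomial $v|_T\in\P_r(T)$ inside $\int_T v(G_m(\lambda,y))\,dy$, you need to know that the point $G_m(\lambda,y)$ at which $v$ is evaluated lies in $T$ for every $y\in T$ and \emph{every} $\lambda\in\S_m^c$; otherwise, for some $\lambda$ the value $v(G_m(\lambda,y))$ would be given by a different polynomial piece of $v$, and the composition would be only piecewise polynomial in $\lambda$. This invariance holds because $G_m(\lambda,y)=\sum_j\lambda_jx_j+b(\lambda)y$ is a convex combination of $y\in T$ and the point $b(\lambda)^{-1}\sum_j\lambda_jx_j\in f$, and $f\subset \bar T$ for every $T\in\T_f$; the paper records precisely this fact just before the lemma and invokes it in the proof (``the map $y\mapsto G_m(\lambda,y)$ maps each $T$ to itself''). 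Your ``subtlety'' paragraph instead worries about whether a finite sum of polynomials on $\S_m^c$ is a polynomial, which is not in doubt; the genuine subtlety is the $T$-invariance of $G_m(\cdot,y)$, and you should state it explicitly. With that one sentence added, your proof is complete and coincides with the paper's.
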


\begin{proof}
If $v \in \P_r\Lambda^0(\T)$, then
the restriction of $v$ to each triangle in $\T_f$ is a polynomial of
degree $r$.
Furthermore, the map $y \mapsto G_m(\lambda, y)$ maps each $T$ to
itself, and depends linearly on $\lambda$. Therefore,
$v(G_m(\lambda,y)) \in \P_r(\S_m^c)$ for each fixed $y$.
Taking the average over $\Omega_f$ with respect to $y$ preserves this
property, so $A_f v \in \P_r(\S_m^c)$.
The second result follows from the fact that the integrand is
independent of $y$, and equal to $v(\sum_j \lambda_jx_j)$, for
$\lambda \in \S_m $.
\end{proof}
We will also need mapping properties of the operator $\lambda_f^*
\circ A_f$. Since $\lambda_f$ maps all of $\Omega$ into $\S_m^c$, the
operator $\lambda_f^*\circ A_f$ maps a function
$v$ defined on $L^2(\Omega_f)$ to $A_fv(\lambda_f(\cdot))$ defined
on all of $\Omega$. It is a key result that this operator is bounded in $L^2$
and $H^1$. In fact, we even have the following.

\begin{lem}\label{H1-preserve-gen}
Assume that $f \in \Delta_m(\T)$ and $I \in \I_m$, with $m<n$.
The operator $\lambda_f^* \circ P_I^* \circ A_f$ is
bounded as an operator from $L^2(\Omega_f)$ to  $L^2(\Omega)$,
as well as from $H^1(\Omega_f)$ to $H^1(\Omega)$.
\end{lem}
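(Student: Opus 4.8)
The plan is to reduce the statement to an estimate for a single term $\lambda_f^*\circ P_I^*\circ A_f$ by an explicit change of variables, and then to verify the $L^2$ and $H^1$ bounds directly using the smoothing effect of the average over $y\in\Omega_f$. Fix $f=[x_0,\dots,x_m]\in\Delta_m(\T)$ with $m<n$ and $I\in\I_m$. Composing, for $u$ defined on $\Omega_f$ and $x\in\Omega$,
\[
(\lambda_f^*\circ P_I^*\circ A_f\,u)(x) = (A_f u)(P_I\lambda_f(x)) = \aint_{\Omega_f} u\bigl(G_m(P_I\lambda_f(x),y)\bigr)\,dy.
\]
Writing $\mu=P_I\lambda_f(x)$, the argument of $u$ is $G_m(\mu,y)=\sum_{j\notin I}\lambda_j(x)\,x_j+b(\mu)\,y$, where $b(\mu)=1-\sum_{j\notin I}\lambda_j(x)$. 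Since $m<n$, $b(\mu)$ is bounded below by a positive constant away from the subsimplex $f(I)$, and near $f(I)$ the map $y\mapsto G_m(\mu,y)$ degenerates onto a set of lower dimension; the averaging over $y$ is exactly what compensates for this degeneration, and controlling that compensation is the technical heart of the proof.

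**Key steps.** First I would establish the $L^2$ bound. Apply Jensen's inequality to pull the $L^2$ norm inside the average over $y$:
\[
\|\lambda_f^*P_I^*A_f u\|_{0,\Omega}^2 \le |\Omega_f|^{-1}\int_{\Omega_f}\int_\Omega \bigl|u\bigl(G_m(P_I\lambda_f(x),y)\bigr)\bigr|^2\,dx\,dy.
\]
For fixed $y$, I would change variables $x\mapsto z=G_m(P_I\lambda_f(x),y)$ on each simplex $T\in\T$; since $x\mapsto\lambda_f(x)$ is piecewise affine and $G_m(\cdot,y)$ is affine in $\lambda$, this is a piecewise affine map of $\Omega$ into $\Omega_f$, whose Jacobian is a (nonnegative) power of $b(P_I\lambda_f(x))$ up to fixed geometric constants. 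The key computation is that this Jacobian, which vanishes like $b^{\,n-m-|I|^{c}}$ (more precisely like $b$ to a power at least $n-m\ge 1$ on the relevant cells), is integrable and, after the further integration in $y$, produces a kernel bounded uniformly by a constant depending only on $\T$. Carrying out this bookkeeping cell by cell — which is the kind of estimate the paper defers to Section~\ref{technical} — gives $\|\lambda_f^*P_I^*A_f u\|_{0,\Omega}\le c\|u\|_{0,\Omega_f}$.

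**The $H^1$ bound.** For the gradient estimate I would differentiate under the integral sign. Because $A_f u(\mu)$ depends on $x$ only through $\mu=P_I\lambda_f(x)$, and $\lambda_f$ is piecewise affine, $\nabla_x(\lambda_f^*P_I^*A_f u) = (D\mu)^{\mathsf T}(\nabla_\mu A_f u)(P_I\lambda_f(x))$ with $D\mu$ a bounded piecewise constant matrix. So it suffices to bound $\|\lambda_f^*P_I^*(\nabla_\mu A_f u)\|_{0,\Omega}$. Here the point is that $\nabla_\mu A_f u(\mu)=\aint_{\Omega_f}\nabla_\mu\bigl[u(G_m(\mu,y))\bigr]\,dy$, and $\nabla_\mu G_m(\mu,y)=x_j-y$ in the $j$-th slot ($j\notin I$) — a bounded vector — so that $\nabla_\mu[u(G_m(\mu,y))]=(\nabla u)(G_m(\mu,y))\cdot(x_j-y)$. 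Crucially there is \emph{no} factor of $b(\mu)^{-1}$ produced by this differentiation: the derivative falls on $G_m$, not on the averaging weight, so the same change-of-variables argument as in the $L^2$ case, now applied to $\nabla u$ in place of $u$, yields $\|\lambda_f^*P_I^*A_f u\|_{1,\Omega}\le c\|u\|_{1,\Omega_f}$.

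**Main obstacle.** The delicate point, and the one I expect to require the careful cell-by-cell analysis postponed to Section~\ref{technical}, is the Jacobian estimate: verifying that for each $T\in\T$ the piecewise affine map $x\mapsto G_m(P_I\lambda_f(x),y)$ has Jacobian comparable to a fixed power of $b(P_I\lambda_f(x))$ no smaller than $b^{\,n-m}$, that the image covers $\Omega_f$ with bounded multiplicity, and that the resulting weight $\int_{\Omega_f}|\Omega_f|^{-1}[\,\cdot\,]^{-1}\,dy$ is uniformly bounded — all with constants depending only on the shape-regularity of $\T$. The condition $m<n$ is exactly what makes the exponent $n-m\ge1$ strictly positive, so the degenerate map still has an integrable Jacobian weight; this is why the hypothesis appears in the statement.
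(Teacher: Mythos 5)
There is a genuine gap at the central step of your $L^2$ argument. For fixed $y$, the map $x\mapsto G_m(P_I\lambda_f(x),y)$ is \emph{not} a change of variables on $\Omega$ (nor on a single $T\in\T$): its derivative is $\sum_{j\notin I}(x_j-y)(\grad\lambda_j)^T$, a sum of $m+1-|I|$ rank-one matrices, hence of rank at most $m+1-|I|\le m+1$, which is strictly less than $n$ except in the single case $m=n-1$, $I=\emptyset$. Its image lies in the at most $(m+1-|I|)$-dimensional simplex spanned by $\{x_j\}_{j\notin I}$ and $y$, a null set in $\Omega_f$, and its Jacobian determinant is identically zero --- not ``a power of $b(P_I\lambda_f(x))$ up to geometric constants.'' Consequently $\int_\Omega|u(G_m(P_I\lambda_f(x),y))|^2\,dx$ is not controlled by $\|u\|_{0,\Omega_f}^2$ for fixed $y$; it is not even well defined for $u\in L^2(\Omega_f)$. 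The $y$-average cannot be invoked ``afterwards'' to repair this: part of it must enter the change of variables itself. This is exactly what the paper's proof in Section~\ref{technical} does: it writes $y=G_m(\mu,q)$ with $q\in f^*(T)$ using the generalized polar coordinates \eqref{polar-T}, keeps the transverse $q$-integration coupled with the $x$-integration --- so that $(\lambda',q)\mapsto G_m(\lambda',q)$, with $\lambda'=\lambda+b(\lambda)\mu$, is a genuine nondegenerate parametrization of $T$ with volume element $b(\lambda')^{n-m-1}J(f,q)\,dq\,d\lambda'$ --- and only pulls the remaining $\mu$-average outside the norm.

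A second, related defect is the use of Jensen's inequality. Even with a correct joint change of variables, bounding $|A_{f,T}u|^2$ by the average of $|u\circ G_m|^2$ and then substituting $\lambda\mapsto\lambda'$ produces the weight $b(\mu)^{n-m-1}\cdot b(\mu)^{-(n-m)}=b(\mu)^{-1}$ in the outer $\mu$-integral, which by \eqref{int-mu-s} is not integrable over $\S_m^c$. The paper instead uses Minkowski's integral inequality, which puts the singular substitution factor inside a square root and yields the weight $b(\mu)^{-1+(n-m)/2}$, integrable precisely because $m<n$. Your treatment of the $H^1$ bound (differentiate under the integral; $DG_m=\sum_{j}(x_j-y)(\grad\lambda_j)^T$ is bounded and no factor $b^{-1}$ appears) is correct in spirit and matches the paper, but it rests on the same defective $L^2$ estimate. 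Note also that the case of general $I$ reduces immediately to $I=\emptyset$ via the identity $\lambda_f^*\circ P_I^*\circ A_{f,T}=\lambda_g^*\circ A_{g,T}$ with $g=f(I)$, which spares you from tracking $|I|$ through the exponents; and a complete proof must still account for the behavior on $\Omega_f^e\setminus\Omega_f$ and on $\Omega\setminus\Omega_f^e$ (where the function is the constant $A_{f,T}v(0)$), which your argument does not address.
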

The arguments involved to establish these boundedness results are
slightly more technical than the discussion
above. Therefore, we will delay the proof of this lemma, and the proofs of the next
three results below, to the final section of the paper.

As we have observed above, the operator $\lambda_f^* \circ A_f$
formally preserves traces on $f$. A weak formulation of this result
is expressed in the next lemma.

\begin{lem}\label{decay-create}
Assume that $f \in \Delta_m(\T)$ with $m <n$. Then
\[
\int_{\Omega} \rho_f^{-2}(x)|v(x) - A_fv(\lambda_f(x))|^2 \, dx
\le c \|v \|_1^2, \quad v \in H^1(\Omega),
\]
where the constant $c = c(\Omega,\T)$ is independent of $v$.
\end{lem}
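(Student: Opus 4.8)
The plan is to reduce the global estimate to a finite sum of local estimates on the triangles $T\in\T_f$, and on each triangle to estimate the difference $v(x)-A_fv(\lambda_f(x))$ by a one-dimensional Poincar\'e-type argument along the fibers of the map $x\mapsto G_m(\cdot,y)$. The weight $\rho_f^{-2}$ is the crucial feature: since $\rho_f(x)=b(\lambda_f(x))$ and $A_fv(\lambda_f(x))$ agrees with $v$ on $f$ (where $\rho_f=0$), we expect a gain of one power of $\rho_f$ from the ``vanishing on $f$'' and a second power from averaging; the bound $\rho_f\le 1$ then absorbs the remaining factor. First I would fix $y\in\Omega_f$ and, writing $x_*=b(\lambda)^{-1}\sum_j\lambda_jx_j\in f$ when $b(\lambda)>0$, note that $G_m(\lambda,y)=b(\lambda)y+(1-b(\lambda))x_*$ is the point on the segment $[x_*,y]$ at parameter $b(\lambda)$ from $x_*$. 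Thus, for $x\in\Omega$ with $\lambda=\lambda_f(x)$, the point $G_m(\lambda,x)$ is $x$ itself iff $x\in\Omega_f$ (one checks $G_m(\lambda_f(x),x)=x$ directly from $\sum_j\lambda_j(x)x_j+b(\lambda_f(x))x=x$), so
\[
v(x)-A_fv(\lambda_f(x))=\aint_{\Omega_f}\bigl[v(G_m(\lambda,x))-v(G_m(\lambda,y))\bigr]\,dy ,
\]
and each integrand is a difference of values of $v$ along a segment lying in a single triangle $T\in\T_f$.

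Next I would estimate the integrand by the fundamental theorem of calculus along that segment. Parametrize the segment from $G_m(\lambda,x)$ to $G_m(\lambda,y)$; its length is $b(\lambda)|x-y|\le b(\lambda)\,\diam(\Omega_f)$, so
\[
|v(G_m(\lambda,x))-v(G_m(\lambda,y))|\le b(\lambda)\,\diam(\Omega_f)\int_0^1|\grad v|\bigl(G_m(\lambda,ty+(1-t)x)\bigr)\,dt .
\]
Dividing by $\rho_f(x)=b(\lambda)$ kills the dangerous factor, leaving $\rho_f^{-1}|v(x)-A_fv(\lambda_f(x))|\le \diam(\Omega_f)\,\aint_{\Omega_f}\int_0^1|\grad v|(G_m(\lambda,ty+(1-t)x))\,dt\,dy$. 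Squaring, applying Cauchy--Schwarz in $(y,t)$, integrating over $x\in\Omega$, and using Tonelli, the whole estimate comes down to bounding
\[
\int_\Omega\aint_{\Omega_f}\int_0^1 |\grad v|^2\bigl(G_m(\lambda_f(x),ty+(1-t)x)\bigr)\,dt\,dy\,dx\le c\,\|\grad v\|_{0,\Omega_f}^2 .
\]
This is a change-of-variables estimate: for fixed $y$ and $t$ the inner integral over $x$ is the $L^2$ norm of $|\grad v|$ pulled back by the map $x\mapsto G_m(\lambda_f(x),ty+(1-t)x)$, whose image lies in $\Omega_f$; one needs a uniform lower bound on the relevant Jacobian factor, piecewise over the cells of $\T$ on which $\lambda_f$ is affine. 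This Jacobian analysis is precisely the kind of computation underlying Lemma~\ref{H1-preserve-gen}, so I would either invoke that lemma directly (the map and its boundedness properties are the same type) or reproduce the short Jacobian bound: on each cell $T'$ of $\T$ contained in $\Omega_f^e$, $\lambda_f$ is affine, $G_m$ is affine in its first argument, and the composite map has Jacobian bounded below by a constant depending only on $\T$ away from the degenerate locus $\{\rho_f=0\}$, while near that locus the factor $\rho_f^{\,n}$ in the Jacobian is exactly compensated because we are measuring the image, not the preimage.

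The main obstacle is this last Jacobian/change-of-variables step: the map $x\mapsto G_m(\lambda_f(x),z)$ (with $z$ between $x$ and $y$) is only piecewise smooth — it is affine on each simplex of $\T$ — and it degenerates on $f$ (where $b(\lambda_f(x))=0$), so the pullback estimate must be carried out cell-by-cell with careful attention to the vanishing of $\rho_f$, and one must check the constants depend only on $\Omega$ and $\T$ and not on $v$. Once that is in hand, the remaining steps (Cauchy--Schwarz, Tonelli, summation over the finitely many cells and over $t$) are routine, and the bound $\rho_f\le1$ lets us replace $\|\grad v\|_{0,\Omega_f}$ by $\|v\|_1$ to obtain the stated inequality. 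The other subtlety worth flagging is justifying the fundamental-theorem-of-calculus step for a general $v\in H^1$ rather than a smooth function; this is handled by density, approximating $v$ by smooth functions and passing to the limit in the (already established) inequality.
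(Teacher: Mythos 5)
Your overall strategy---gain a factor of $b(\lambda)$ from a fundamental-theorem-of-calculus estimate along the fibers of $G_m$ and then do a weighted change of variables---is the right spirit, and it is essentially how the paper handles \emph{half} of the estimate. But the identity on which your decomposition rests is false. You claim $G_m(\lambda_f(x),x)=x$ for $x\in\Omega_f$. Computing, $G_m(\lambda_f(x),x)=\sum_{j=0}^m\lambda_j(x)x_j+b(\lambda_f(x))x = x+\sum_{j=0}^m\lambda_j(x)(x_j-x)$, and the correction term vanishes only when $x\in f$ (or when all $\lambda_j(x)=0$). Already for $m=0$ the point $G_0(\lambda_0(x),x)=\lambda_0(x)x_0+(1-\lambda_0(x))x$ lies strictly between $x$ and the vertex $x_0$ for interior $x$. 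The correct reproducing identity, which the paper sets up in Section~5, is $G_m(\lambda_f(x),q_f(x))=x$ where $q_f(x)\in f^*$ is the radial projection of $x$ onto the face opposite $f$; so $v(x)$ must be written as $v(G_m(\lambda,q_f(x)))$, not $v(G_m(\lambda,x))$, and your displayed decomposition of $v(x)-A_fv(\lambda_f(x))$ does not hold.

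Even after this correction a second obstruction remains: the segment from $G_m(\lambda,q_f(x))$ to $G_m(\lambda,y)$ lies in a single element of $\T_f$ only when $q_f(x)$ and $y$ belong to the same $T$; since $y$ ranges over all of $\Omega_f$, most of these segments cross several elements (and need not even stay inside $\Omega_f$, which is not convex), so the single-triangle FTC-plus-Jacobian argument cannot close. The paper resolves this by splitting the difference through an intermediate operator $\tilde A_{f,T}v(\lambda)=\aint_{f^*(T)}v(G_m(\lambda,q))\,dq$: the piece $v-\tilde A_{f,T}v$ is controlled by a Poincar\'e inequality on the connected $(n-m-1)$-dimensional link $f^*$, with a scaling argument producing the needed factor $b(\lambda)^2$ (this is the transversal comparison your proposal is missing entirely), while the piece $\tilde A_{f,T}v-A_{f,T}v$ is the radial FTC estimate along segments $[q_f(y),y]$ that \emph{do} stay in a single $T$, followed by the polar-coordinate substitution $\lambda\mapsto\lambda+tb(\lambda)\mu$. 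Note also that your final change-of-variables step involves the map $x\mapsto G_m(\lambda_f(x),ty+(1-t)x)$, which is quadratic in $x$ (the term $b(\lambda_f(x))(1-t)x$), not piecewise affine as you assert, so the Jacobian bound you defer to would itself require the polar-coordinate machinery. Without the transversal Poincar\'e step and the $(\lambda,q)$ coordinates, the argument does not go through.
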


Since the function $\rho_f(x)$ is identically zero on
$f$, this result shows that for any $v \in H^1(\Omega_f)$ ``the
error,'' $v - A_fv$,
has a decay
property near $f$.

The next result shows that the operator $\lambda_f^* \circ P_I^* \circ
A_f$ preserves such decay properties.

\begin{lem}\label{decay-preserve}
Assume that $f \in \Delta_m(\T)$ and $I \in \I_m$, with $m<n$,
and let $g = f(I) \in \Delta(f)$.
There is a constant $c=c(\Omega,\T)$, independent of $v$, such that
\[
\int_{\Omega}\rho_g^{-2}(x) |A_fv(P_I\lambda_f(x))|^2 \, dx \le c\,
\Big[\int_{\Omega}\rho_g^{-2}(x) |v(x)|^2 \, dx + \| \grad v \|_{0}^2\Big]
\]
for all $v \in H^1(\Omega)$, such that
$\rho_g^{-1} v \in L^2(\Omega)$.
\end{lem}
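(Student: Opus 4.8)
The plan is to compare $W:=A_fv(P_I\lambda_f(\cdot))$ directly with $v$, splitting the comparison so that the genuinely new work is isolated in a single estimate. Throughout I will write $V:=A_fv(\lambda_f(\cdot))$, so that $V$ and $W$ are defined on all of $\Omega$, and I will use that, from the definitions of $\rho_f$, of $P_I$ and of $g=f(I)$, one has $\rho_f(x)=b(\lambda_f(x))$, $\rho_g(x)=b(P_I\lambda_f(x))$, hence $0\le\rho_f\le\rho_g\le 1$ on $\Omega$ and $\sum_{j\in I}\lambda_j(x)=\rho_g(x)-\rho_f(x)\le\rho_g(x)$. Applying the triangle inequality in the form $W=v-(v-V)+(W-V)$ gives
\[
\int_{\Omega}\rho_g^{-2}|W|^2\,dx\le 3\int_{\Omega}\rho_g^{-2}|v|^2\,dx+3\int_{\Omega}\rho_g^{-2}|v-V|^2\,dx+3\int_{\Omega}\rho_g^{-2}|V-W|^2\,dx .
\]
The first term is already part of the asserted bound. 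For the second, $\rho_g\ge\rho_f$ gives $\rho_g^{-2}\le\rho_f^{-2}$ a.e., so Lemma~\ref{decay-create} together with $\rho_g\le 1$ gives
\[
\int_{\Omega}\rho_g^{-2}|v-V|^2\,dx\le\int_{\Omega}\rho_f^{-2}|v-V|^2\,dx\le c\|v\|_1^2\le c\Bigl(\int_{\Omega}\rho_g^{-2}|v|^2\,dx+\|\grad v\|_0^2\Bigr).
\]
Hence the whole statement will follow once I show
\begin{equation}\label{dpr:1}
\int_{\Omega}\rho_g^{-2}|V-W|^2\,dx\le c\,\|\grad v\|_0^2 .
\end{equation}

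To prove \eqref{dpr:1} I would first reduce to $v\in C^\infty(\bar\Omega)$, the general case following by density and Fatou's lemma once one knows, from Lemma~\ref{H1-preserve-gen} with $I=\emptyset$ and with the given $I$, that $\lambda_f^*\circ A_f$ and $\lambda_f^*\circ P_I^*\circ A_f$ are bounded $L^2(\Omega_f)\to L^2(\Omega)$. For $x\in\Omega$ and $t\in[0,1]$ introduce $\lambda^{(t)}(x)\in\S_m^c$, the point whose $j$-th coordinate is $\lambda_j(x)$ for $j\notin I$ and $t\lambda_j(x)$ for $j\in I$; it lies in $\S_m^c$ because its coordinates are nonnegative and sum to $\sum_{j\notin I}\lambda_j(x)+t\sum_{j\in I}\lambda_j(x)\le\sum_j\lambda_j(x)\le1$, and $\lambda^{(0)}(x)=P_I\lambda_f(x)$, $\lambda^{(1)}(x)=\lambda_f(x)$. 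From $G_m(\mu,y)=\sum_j\mu_jx_j+b(\mu)y$ one computes $\frac{d}{dt}G_m(\lambda^{(t)}(x),y)=\sum_{j\in I}\lambda_j(x)(x_j-y)$, and $G_m(\lambda^{(t)}(x),y)$ stays in the closed simplex of $\T_f$ containing $y$, hence in $\bar\Omega$. So for smooth $v$ the fundamental theorem of calculus, followed by the average over $y\in\Omega_f$, gives
\[
V(x)-W(x)=\aint_{\Omega_f}\int_0^1\grad v\bigl(G_m(\lambda^{(t)}(x),y)\bigr)\cdot\Bigl(\sum_{j\in I}\lambda_j(x)(x_j-y)\Bigr)\,dt\,dy .
\]
Bounding $|x_j-y|\le\diam\Omega$ and $\sum_{j\in I}\lambda_j(x)\le\rho_g(x)$, squaring, and using Cauchy--Schwarz against the normalised measure $|\Omega_f|^{-1}\,dy\,dt$ yields the pointwise inequality
\[
\rho_g(x)^{-2}|V(x)-W(x)|^2\le(\diam\Omega)^2\aint_{\Omega_f}\int_0^1\bigl|\grad v(G_m(\lambda^{(t)}(x),y))\bigr|^2\,dt\,dy ,
\]
in which \emph{both} powers of $\rho_g$ have cancelled --- this is essential, as $\rho_g^{-2}$ need not be locally integrable near $g$.

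Integrating over $x\in\Omega$, estimate \eqref{dpr:1} reduces to the $t$-parametrised change-of-variables bound
\begin{equation}\label{dpr:2}
\int_{\Omega}\aint_{\Omega_f}\int_0^1\bigl|\phi\bigl(G_m(\lambda^{(t)}(x),y)\bigr)\bigr|^2\,dt\,dy\,dx\le c\,\|\phi\|_{0,\Omega_f}^2,\qquad \phi\in L^2(\Omega_f),
\end{equation}
used with $\phi=|\grad v|$, and \eqref{dpr:2} is the step I expect to be the main obstacle. It should be handled by exactly the technique used for the mapping properties in Lemma~\ref{H1-preserve-gen}: decompose $\Omega$ and $\Omega_f$ into the simplices of $\T$, on each of which $x\mapsto\lambda_f(x)$ is affine, so that for fixed $(t,y)$ the map $x\mapsto G_m(\lambda^{(t)}(x),y)$ is affine with image in a single closed simplex containing $f$; then change variables and estimate the Jacobian factors. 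The feature that is new compared with the unparametrised operator $A_f$ is the extra integration in $t$, and it is precisely what controls the degeneracy of $x\mapsto\lambda_f(x)$ near $f$: in $G_m(\lambda^{(t)}(x),y)$ the $y$-direction is scaled by $b(\lambda^{(t)}(x))=\rho_g(x)-t\sum_{j\in I}\lambda_j(x)$, which is affine in $t$ and, for $t\in[0,1]$, never smaller than $\rho_f(x)$, so the Jacobian in $y$ is a power of this quantity and the $dt$-integral of its reciprocal powers produces the weights that the Jacobian estimates of Section~\ref{technical} bound against $\|\phi\|_{0,\Omega_f}^2$. Granting \eqref{dpr:2}, \eqref{dpr:1} follows with $c=c(\Omega,\T)$, the density step removes the smoothness of $v$, and the degenerate cases $I=\emptyset$ (where $V=W$) and $I=\{0,\dots,m\}$ (where $g$ is empty and $\rho_g\equiv1$) are covered by the same argument.
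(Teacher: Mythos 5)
Your outer decomposition $W=v-(v-V)+(W-V)$, with the second term handled by $\rho_g\ge\rho_f$, $\rho_g\le1$ and Lemma~\ref{decay-create}, is fine, and the homotopy representation of $V-W$ with the factor $\sum_{j\in I}\lambda_j\le\rho_g$ is correct. But the argument is not complete: everything has been reduced to the estimate \eqref{dpr:2}, which you yourself flag as ``the main obstacle'' and then do not prove, and it is not a routine consequence of the technique behind Lemma~\ref{H1-preserve-gen} as you suggest. The reason is that you apply Cauchy--Schwarz \emph{pointwise} in $(y,t)$ before integrating in $x$. Once the square sits inside the $(y,t)$-average, the change of variables $y\mapsto G_m(\lambda^{(t)}(x),y)$ produces the full Jacobian power $b(\lambda^{(t)}(x))^{-n}$, and \eqref{dpr:2} becomes equivalent to the uniform boundedness of the density
\[
\sup_{z}\int_0^1\int_{\{\lambda\in\S_m^c\,:\,\lambda^{(t)}\le\lambda_f(z)\}} b(\lambda)^{n-m-1}\,b(\lambda^{(t)})^{-n}\,d\lambda\,dt ,
\]
which is far more singular than anything estimated in Section~\ref{technical} (there, Minkowski's integral inequality is applied \emph{before} squaring, so only the halved exponent $b(\mu)^{-1+(n-m)/2}$ ever appears, and \eqref{int-mu-s} applies). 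Whether this density is bounded requires a delicate interplay between the restriction to the image simplex and the extra $t$-integration near $t=1$; nothing in the paper supplies it, and naive bounds on it diverge for $m\ge1$. This is exactly the trap the paper avoids in its proof of the analogous difference estimate, Lemma~\ref{decay-cancel}: there the same fundamental-theorem-of-calculus representation is used, but the $L^2(dx)$ norm is taken first and Minkowski's inequality is applied over $(\mu,q,t)$, and only then is the substitution $\lambda\mapsto\lambda'$ performed. If you want to keep your decomposition, the correct way to finish is not \eqref{dpr:2} but a telescoping of $V-W$ over a chain $\emptyset\subset\{i_1\}\subset\cdots\subset I$, bounding each increment by (a relabelled) Lemma~\ref{decay-cancel} and using $\lambda_{i_\ell}\le\rho_g$.

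You have also made the lemma much harder than it is. The paper's proof requires no comparison of $A_fv(P_I\lambda_f)$ with $A_fv(\lambda_f)$ at all: since $G_m(P_I\lambda,y)$ depends only on the coordinates of $\lambda$ outside $I$, one has the identity
\[
A_fv(P_I\lambda_f(\cdot))=\sum_{T\in\T_f}\frac{|T|}{|\Omega_f|}\,A_{g,T}v(\lambda_g(\cdot)),\qquad g=f(I),
\]
so the operator is just a convex combination of the elementwise averaging operators attached to the face $g$. The per-element estimate \eqref{decay-create-T}, already established in the proof of Lemma~\ref{decay-create} and applied with $g$ in place of $f$, gives $\int_\Omega\rho_g^{-2}|v-A_{g,T}v(\lambda_g)|^2\,dx\le c\|v\|_1^2$, and the triangle inequality (together with $\rho_g\le 1$) finishes the proof. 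No new estimate is needed.
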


Finally, the following lemma will be a key ingredient
in the proof of Lemma~\ref{prop-2} to follow.

\begin{lem}\label{decay-cancel}
Assume that $f = [x_0,x_1, \ldots x_m] \in \Delta_m(\T)$ and
$I \in \I_m$, with $m<n$
and such that $0 \notin I$. Furthermore, let $I' = (0,I)$.
Then
\[
\int_{\Omega} \lambda_0^{-2}(x)(A_fv(P_I\lambda_f(x)) -
A_fv(P_{I'}\lambda_f(x)))^2 \, dx
\le c \|\grad v \|_{0,\Omega_f}^2, \quad v
\in H^1(\Omega_f),
\]
where the constant $c= c(\Omega,\T)$ is independent of $v$.
\end{lem}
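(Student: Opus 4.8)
The plan is to reduce the estimate to an application of Lemma~\ref{decay-create} after exploiting the algebraic structure of the difference $A_fv\circ P_I\lambda_f - A_fv\circ P_{I'}\lambda_f$. Since $I' = (0,I)$ differs from $I$ only by adjoining the index $0$, the two pullbacks $P_I\lambda_f(x)$ and $P_{I'}\lambda_f(x)$ agree in every coordinate except the zeroth, where the first has value $\lambda_0(x)$ and the second has value $0$. Thus, writing $A_fv$ as a function of its $m+1$ arguments with the other $m$ coordinates frozen at their common values, the difference is
\[
A_fv(P_I\lambda_f(x)) - A_fv(P_{I'}\lambda_f(x)) = \int_0^{\lambda_0(x)} \partial_0 A_fv(\,\cdot\,)\, dt,
\]
a one-dimensional fundamental-theorem-of-calculus identity in the $\lambda_0$ variable. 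Dividing by $\lambda_0(x)$ and applying the Cauchy--Schwarz inequality in $t$ gives a pointwise bound of $\lambda_0^{-2}(x)(\cdots)^2$ by an average over $t \in (0,\lambda_0(x))$ of $|\partial_0 A_fv|^2$ evaluated along the segment. Integrating over $\Omega$ and using that $\lambda_0$ is bounded, this reduces matters to controlling $\|\partial_0 A_fv\|_0^2$, or more precisely a weighted version thereof, by $\|\grad v\|_{0,\Omega_f}^2$.

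The heart of the matter is therefore the bound on the derivative $\partial_0 A_fv$. Differentiating under the average sign in the definition $A_fv(\lambda) = \aint_{\Omega_f} v(G_m(\lambda,y))\,dy$ and using $\partial_{\lambda_0} G_m(\lambda,y) = x_0 - y$, we get $\partial_0 A_fv(\lambda) = \aint_{\Omega_f}\grad v(G_m(\lambda,y))\cdot(x_0-y)\,dy$. The factor $x_0 - y$ is bounded by $\diam\Omega_f$, so $|\partial_0 A_fv(\lambda)| \le c\,\aint_{\Omega_f}|\grad v(G_m(\lambda,y))|\,dy$. What remains is exactly an $L^2$-boundedness statement for the pullback-and-average operator applied to $\grad v$: integrating $|\partial_0 A_fv(P_I\lambda_f(x))|^2$ over $x \in \Omega$ against a weight must be controlled by $\|\grad v\|_{0,\Omega_f}^2$. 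This is precisely the type of estimate packaged by Lemma~\ref{H1-preserve-gen} (or more directly, the change-of-variables/Jacobian analysis underlying its proof, applied to the scalar function $|\grad v|$ rather than to $v$ itself); one applies it with the roles reorganized so that the averaging operator acts on a component of $\grad v \in L^2(\Omega_f)$.

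The main obstacle I anticipate is handling the interaction between the weight $\lambda_0^{-2}(x)$ and the change of variables $x \mapsto P_I\lambda_f(x)$, since $\lambda_0$ can degenerate and the segment of integration in $t$ must be tracked carefully through the geometry of the macroelement $\Omega_f$. Concretely, after the fundamental-theorem step one has a double integral over $(x,t)$ with $0 < t < \lambda_0(x)$, and one wants to reverse the order and absorb the $t$-integration; controlling the resulting Jacobian factors — and verifying that the segment from $P_{I'}\lambda_f(x)$ to $P_I\lambda_f(x)$ stays inside $\S_m^c$ so that $G_m$ maps it into $\Omega_f$ — is where the technical weight of the argument lies. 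I expect this is exactly why the authors defer the proof to Section~\ref{technical}, alongside Lemmas~\ref{H1-preserve-gen}, \ref{decay-create}, and \ref{decay-preserve}, and I would organize the argument so that all such change-of-variables estimates are invoked as black boxes from those companion lemmas, leaving only the elementary one-dimensional calculus identity and a Cauchy--Schwarz step as the genuinely new content here.
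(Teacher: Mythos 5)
Your opening moves coincide with the paper's: the difference is written as a fundamental--theorem--of--calculus integral along the segment in $\lambda$-space joining $P_{I'}\lambda_f(x)$ to $P_I\lambda_f(x)$ (the paper applies the FTC to $v$ along $t\mapsto G_j(\lambda,y)+t\lambda_0(y-x_0)$ under the $y$-average, which is the same identity), this produces the explicit factor $\lambda_0$ that cancels the weight, and Cauchy--Schwarz/Minkowski then reduces everything to an $L^2$ bound on the averaged gradient evaluated \emph{along that segment}. The gap is in how you propose to close the argument: the estimate you need is not available as a black box from Lemma~\ref{H1-preserve-gen} or its proof. That lemma bounds $\lambda_f^*\circ P_J^*\circ A_f$, i.e.\ the operator evaluated at the endpoints of your segment, whereas you need to control $\int_\Omega\int_0^1|A_f(|\grad v|)(\sigma_x(t))|^2\,dt\,dx$ with $\sigma_x(t)$ an interpolating family, integrably in $t$. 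The paper must carry out a new polar-coordinate computation for the two-parameter family $\lambda'(\lambda,t,\mu)$: the substitution $\lambda\mapsto\lambda'$ has Jacobian $(1-t)b(\mu)$, which degenerates at one end of the segment, and the proof hinges on the lower bound $b(\lambda')\ge b(\lambda)b(\mu)$ together with the integrability of $b(\mu)^{-1+(n-j)/2}(1-t)^{-1/2}$ over $\S_j^c\times(0,1)$. None of this is contained in the companion lemmas; it is the substantive content of this one, and your plan leaves it unproved while describing it as ``only'' bookkeeping.

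A second, related point: after Jensen you cannot freeze $t$ and the averaging variable $y$ and bound $\int_\Omega|\grad v(G_m(\sigma_x(t),y))|^2\,dx$ by $\|\grad v\|_{0,T}^2$, because for fixed $y$ the map $x\mapsto G_m(\sigma_x(t),y)$ collapses the $n$-dimensional domain onto the $(m+1)$-dimensional simplex $[x_0,\dots,x_m,y]$, and the restriction of an $L^2$ gradient to such a set is not controlled. The integration over $y$ (equivalently over $q\in g^*(T)$ in the coordinates of \eqref{polar-T}) must be kept inside the $L^2$ norm and combined with the $\lambda$-integration to reconstitute a full $n$-dimensional integral over $T$; this is exactly how the paper's chain of inequalities is organized. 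You also omit the preliminary reductions from $A_f$ on $\Omega$ to $A_{g,T}$ on $\Omega_g$ with $g=f(I)$, and the treatment of elements $\hat T\in\T_{x_0}\setminus\T_g$; these are routine but needed, since the weight $\lambda_0^{-2}$ is singular on all of $\Omega_{x_0}$, not just on $\Omega_g$.
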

We remark that $A_fv(P_I\lambda_f(x)) -
A_fv(P_{I'}\lambda_f(x))= 0$ outside $\Omega_{x_0}$. Therefore, the
integrand in the
integral above should be considered to be zero outside $\Omega_{x_0}$.

\section{Precise definitions and main results}\label{main}
The transform $\B = B_{\T}$ will be defined by an inductive
process which we now present.

\subsection{Definition of the transform}
We will define the map $\B$ by a recursion with respect to the
dimension of subsimplexes $f \in \Delta(\T)$.  The map $\B$ can be
defined on the space $L^2$, but the more interesting properties appear
when it is restricted to $H^1$. The main tool for constructing the
operator $\B$ are trace preserving cut--off operators $C_f$ which map
functions defined on $\Omega_f$ into functions defined on all of
$\Omega$. The operators $C_f$ are defined by utilizing the
corresponding operators $K_m$ defined on $\S_m^c$.  If $f \in
\Delta_m(\T)$, with $m <n$, then
\[
C_f v = (\lambda_f^* \circ K_m \circ A_f) v
= (K_m \circ A_f) v(\lambda_f(\cdot)).
\]
A more detailed  representation of the operator
$C_f$ is given by
\begin{equation}\label{B-def-3}
C_f v(x) = \sum_{I \in \I_m}(-1)^{|I|}\frac{\rho_f(x)}
{\rho_{f(I)}(x)}A_f v(P_I\lambda_f(x)),
 \end{equation}
where we recall that $f(I)= \{ x \in f \, | \, P_I\lambda_f(x)=
\lambda_f(x) \, \}$.  Observe that $\lambda_f \equiv (0,\ldots ,0)$
outside $\Omega_f^e$ and that all functions of the form $K_mw$ are zero
at the origin in $\R^{m+1}$. As a consequence, $\supp (C_f v)$ is
contained in the closure of $\Omega_f^e$.  For the final case when $f
\in \Delta_n(\T) = \T$, we simply define the operator $C_f$ to be the
restriction to $f$, i.e., $C_fv =v|_f$.

If $f \in \Delta_0(\T)$, i.e., $f$ is a vertex, then $B_f =
C_f$.
More generally, for each $f \in \Delta_m(\T)$
we define
\begin{equation}\label{B-def-1}
B_fu = C_f u^m, \quad \text{where }u^m
= (u - \sum_{\stackrel{g \in \Delta_j(\T)}{j<m}}B_gu).
\end{equation}
Alternatively, the functions $u^m$ satisfy $u^0=u$ and the recursion
\[
u^{m+1} = u^m - \sum_{f \in \Delta_m(\T)} C_f u^m = u^m - \sum_{f \in \Delta_m(\T)} B_f u.
\]
As a consequence of the definition of the operator $C_f$ for $\dim f = n$,
it follows by construction that $u = \sum_f B_fu$.
Furthermore, from the corresponding property of the operator $C_f$,
it also follows that $\supp (B_f u)$ is in the closure of $\Omega_f^e$.
Also, by Lemma~\ref{H1-preserve-gen}, and from the fact that $\rho_f/\rho_{f(I)} \le 1$,
it follows directly that the operator $B_f$ is bounded in $L^2$.
However, it is more challenging
to establish that $B_f$ is bounded in $H^1$, and that $B_fu \in \0 H^1(\Omega_f)$ for $u \in H^1(\Omega)$.

\subsection{Main properties of the transform}
The main arguments needed for verifying the properties (i)--(iii) of
the transform $\B$, stated in Section~\ref{prelim} above, will be
given here.  We will first establish that the piecewise polynomial
space, $\P_r\Lambda^0(\T)$, is preserved by the transform, i.e., we
will show property (iii).

\begin{thm}\label{pol-preserve-B}
If $u \in \P_r\Lambda^0(\T)$, then $B_f u \in \0\P_r\Lambda^0(\T_f)$
for all $f \in \Delta(\T)$.
\end{thm}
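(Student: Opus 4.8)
The plan is to prove this by induction on $m = \dim f$, following exactly the recursive structure of the definition \eqref{B-def-1}. The induction hypothesis at level $m$ is that for every $g \in \Delta_j(\T)$ with $j < m$, we have $B_g u \in \0\P_r\Lambda^0(\T_g)$; in particular the intermediate function $u^m = u - \sum_{j<m}\sum_{g\in\Delta_j(\T)} B_g u$ is again a continuous piecewise polynomial of degree at most $r$ on $\T$, since each $\0\P_r\Lambda^0(\T_g)$ sits inside $\P_r\Lambda^0(\T)$ after extension by zero. For the base case $m=0$, $f$ is a vertex and $B_f u = C_f u = (\lambda_f^*\circ K_0 \circ A_f)u$; by Lemma~\ref{pol-preserve-A}, $A_f u \in \P_r(\S_0^c) = \P_r([0,1])$, and $K_0$ preserves $\P_r$, so $C_f u(\lambda_0(\cdot))$ is a polynomial in $\lambda_0$ of degree $\le r$, hence a piecewise polynomial of degree $\le r$ on $\T_f$; the support statement and vanishing at the origin give membership in $\0\P_r\Lambda^0(\T_f)$.

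For the inductive step with $0 < m < n$, I would argue as follows. Given the induction hypothesis, $u^m \in \P_r\Lambda^0(\T)$, so I may apply the tools with $v = u^m$. By Lemma~\ref{pol-preserve-A}, $A_f u^m \in \P_r(\S_m^c)$. To invoke Lemma~\ref{pol-preserve-E} I must check the compatibility condition $\tr_{\S_m}(A_f u^m) \in \0\P_r(\S_m)$, \emph{i.e.}, that this trace vanishes on $\partial\S_m$. By the second assertion of Lemma~\ref{pol-preserve-A}, $\tr_{\S_m}(A_f u^m)$ is just $u^m$ transported to $f$ via $\lambda_f$; so the condition is exactly that $u^m$ vanishes on $\partial f$. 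This is where the recursion pays off: $u^m$ is obtained from $u$ by subtracting all $B_g u$ for $\dim g < m$, and the point is that the combined effect of these subtractions annihilates the trace of $u$ on every subsimplex of dimension $< m$ — in particular on $\partial f = \bigcup_{g\in\Delta(f),\ \dim g<m} g$. Establishing this ``trace-killing'' property of the recursion is the crux of the argument; it should follow by a secondary induction, using that $C_g v$ reproduces the trace of (the relevant average of) $v$ on $g$ while the cut-off leaves lower-dimensional traces unchanged in the appropriate averaged sense — concretely, that $A_f$ commutes suitably with restriction to faces, so that $\tr_g(u^m) = 0$ for all $g$ with $\dim g < m$. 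Once the compatibility condition is in hand, Lemma~\ref{pol-preserve-E} gives $K_m(A_f u^m) \in \P_r(\S_m^c)$, and pulling back by $\lambda_f$ — which is piecewise linear with respect to $\T_f$ — yields $C_f u^m = B_f u \in \P_r\Lambda^0(\T_f)$.

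It remains to upgrade $\P_r\Lambda^0(\T_f)$ to $\0\P_r\Lambda^0(\T_f)$, \emph{i.e.}, to show $B_f u$ vanishes on $\Omega\setminus\Omega_f$. From the detailed representation \eqref{B-def-3} together with the discussion following it, $\supp(C_f v)$ lies in the closure of $\Omega_f^e$, and the part of $\Omega_f^e\setminus\Omega_f$ is controlled because $\lambda_f$ there lands in $\partial\S_m^c\setminus\S_m$, where $K_m w$ vanishes (Lemma~\ref{pol-preserve-E} gives $\tr_{\partial\S_m^c\setminus\S_m}K_m w = 0$). So $B_f u$ vanishes outside $\Omega_f$, giving $B_f u \in \0\P_r\Lambda^0(\T_f)$. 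Finally, the case $f\in\Delta_n(\T)$ is immediate: there $C_f$ is restriction to $f = T$, and $u^n$, being a piecewise polynomial of degree $\le r$ that vanishes on all lower-dimensional subsimplexes (by the trace-killing property again), restricts on each $T$ to an element of $\0\P_r(T) \subset \0\P_r\Lambda^0(\T_T)$.

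I expect the main obstacle to be the precise bookkeeping behind the trace-killing claim — verifying that after subtracting $\sum_{j<m} B_g u$ the function $u^m$ genuinely has zero trace on $\partial f$ — since this requires understanding how the averaging operators $A_g$ interact with restriction to subsimplexes and how the rational cut-off factors $\rho_g/\rho_{g(I)}$ behave on faces. Everything else is a direct assembly of Lemmas~\ref{pol-preserve-E} and \ref{pol-preserve-A} with the support bookkeeping already spelled out around \eqref{B-def-3}.
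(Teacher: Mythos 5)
Your skeleton is exactly the paper's: induct on $m=\dim f$, feed $v=u^m$ into Lemma~\ref{pol-preserve-A} and then Lemma~\ref{pol-preserve-E}, and get the support statement from the vanishing of $K_mw$ on $\partial\S_m^c\setminus\S_m$. You have also correctly located the crux: to invoke Lemma~\ref{pol-preserve-E} you need $\tr_{\S_m}(A_fu^m)\in\0\P_r(\S_m)$, i.e.\ $\tr_{\partial f}u^m=0$. But this is precisely the step you leave open (``it should follow by a secondary induction\dots''), so as written the argument has a gap at its load-bearing point. Moreover you somewhat misdiagnose what is needed to close it: no secondary induction, and no further analysis of how $A_g$ interacts with restriction to faces or how the rational factors $\rho_f/\rho_{f(I)}$ behave on faces, is required. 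The fix is simply to strengthen the main induction hypothesis so that it carries the trace condition along: assume at level $m$ both that $B_gu\in\0\P_r\Lambda^0(\T_g)$ for all $g$ of dimension $<m$ \emph{and} that $u^m\in\P_r\Lambda^0(\T)$ with $\tr_g u^m=0$ for all $g\in\Delta_j(\T)$, $j<m$.

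With that hypothesis the step is immediate from the two lemmas you already cite. For $f\in\Delta_m(\T)$ the hypothesis gives $\tr_f u^m\in\0\P_r(f)$; the second assertion of Lemma~\ref{pol-preserve-A} identifies $\tr_{\S_m}(A_fu^m)$ with this trace, so it lies in $\0\P_r(\S_m)$; Lemma~\ref{pol-preserve-E} then yields $K_mA_fu^m\in\P_r(\S_m^c)$ with $\tr_{\S_m}(K_mA_fu^m-A_fu^m)=0$, whence $B_fu\in\0\P_r\Lambda^0(\T_f)$ and, crucially, $\tr_f B_fu=\tr_f u^m$. Since for $g\in\Delta_m(\T)$ with $g\neq f$ the function $B_gu\in\0\P_r\Lambda^0(\T_g)$ contributes nothing to the trace on $f$, one gets $\tr_f u^{m+1}=\tr_f u^m-\tr_f B_fu=0$, and the traces on simplexes of dimension $<m$ remain zero because each $\tr_g B_gu=\tr_g u^m$ already vanishes on $\partial g$ and $B_gu$ vanishes off $\Omega_g$. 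This restores the hypothesis at level $m+1$; the terminal case $m=n$ then goes through exactly as you describe, since $\tr_f u^n=0$ for all $f\in\Delta_{n-1}(\T)$ forces $u^n=\sum_{T\in\T}u^n_T$ with $u^n_T\in\0\P_r\Lambda^0(T)$. In short: right route, right obstacle identified, but the obstacle is dissolved by bookkeeping inside the induction you already set up rather than by any new lemma.
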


\begin{proof}
Assume that $u \in \P_r\Lambda^0(\T)$.
We will show that for all $m$, $0 \le m \le n$,  the
following properties hold:
\begin{equation}\label{ind-hyp-1}
u^m \in \P_r\Lambda^0(\T), \quad \text{with } \tr_g u^m = 0, \quad g \in
\Delta_j(\T), \, j < m,
\end{equation}
and
\begin{equation}\label{ind-hyp-2}
B_g u \in \0\P_r\Lambda^0(\T_g), \, g \in
\Delta_j(\T), \, j < m.
\end{equation}
Here the function $u^m$ is defined by \eqref{B-def-1}.
The proof of \eqref{ind-hyp-1} and \eqref{ind-hyp-2}
goes by induction on $m$.
Note that for $m=0$, these properties hold with $u^0 = u$.
Assume now that \eqref{ind-hyp-1} and \eqref{ind-hyp-2}
hold for a given $m$, $m
<n$. Let  $ v \equiv  u^m \in \P_r\Lambda^0(\T)$.
Then, for any $f = [x_0,x_1, \ldots x_m]\in \Delta_m(\T)$,
we have $\tr_f v \in \0\P_r(f)$.
Therefore, it follows from Lemma~\ref{pol-preserve-A}
that
\[
A_fv \in \P_r(\S_m^c) \quad \text{and }\tr_{\S_m}A_fv \in
\0\P_r(\S_m).
\]
In fact, if $\lambda \in \S_m$,
then $A_f v(\lambda) = v(x)$, where $x = \sum_{j=0}^m \lambda_jx_j
\in f$.
But from Lemma~\ref{pol-preserve-E}, we can then conclude that
\[
(K_m\circ A_f)v \in \P_r(\S_m^c), \quad \text{with } \tr_{\S_m}(I -
K_m)A_fv = 0, \,
\tr_{\partial \S_m^c\setminus \S_m}(K_m\circ A_f)v = 0.
\]
However, this implies that
\[
B_fu = C_f^m u^m = (K_m\circ
A_f)v(\lambda_f(\cdot)
\in \0\P_r\Lambda^0(\T_f),
\]
and with $\tr_f B_f u = \tr_f u^m$.
This property holds for all $f \in \Delta_m(\T)$. Therefore, since
\[
u^{m+1} = u^m - \sum_{f \in \Delta_m(\T)} B_fu,
\]
we can conclude
that \eqref{ind-hyp-1} and \eqref{ind-hyp-2} hold with
$m$ replaced by $m+1$.
This completes the induction argument. In particular, we have shown
that $B_f u \in \0\P_r\Lambda^0(\T_f)$ for
all $f \in \Delta_m(\T)$, $m < n$.
Furthermore,
$\tr_f u^n = 0$ for all
$f \in \Delta_{n-1}(\T)$. This means that
\[
u^n = \sum_{T \in \T} u_T^n, \quad u_T^n \in \0\P_r\Lambda^0(T), \, T
\in \T.
\]
Since $B_T u =  u_T^n$ for any $T \in \Delta_n(\T) = \T$,
the proof is completed.
\end{proof}

The next result will be a key step for showing properties (i) and (ii)
of the transform.

\begin{lem}\label{prop-1}
Assume that $f \in \Delta_m(\T)$, with $m <n$, and that $v \in
H^1(\Omega_f)$
with $\rho_g^{-1}v \in L^2(\Omega_f)$, where $g = f(I)$ for $I \in \I_m$.
Define $w = \frac{\rho_f}{\rho_g}
A_fv(P_I\lambda_f(\cdot))$.
Then $w \in H^1(\Omega)$ and $\rho_f^{-1}w \in L^2(\Omega)$.
\end{lem}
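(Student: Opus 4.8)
The plan is to establish the two claimed properties of $w = \frac{\rho_f}{\rho_g} A_f v(P_I\lambda_f(\cdot))$ by combining the boundedness results of Section~\ref{tools} that were deferred to the final section. First I would record the elementary pointwise inequality $\rho_f(x)/\rho_g(x) \le 1$ for all $x$, which holds because $g = f(I) \in \Delta(f)$ forces $\lambda_g$ to be a subcollection of the components of $\lambda_f$, hence $\rho_g = 1 - \sum_{j \in g}\lambda_j \ge 1 - \sum_{j}\lambda_j = \rho_f$ pointwise (both being nonnegative piecewise linear functions supported appropriately). Granting this, the function $z := A_f v(P_I\lambda_f(\cdot)) = (\lambda_f^* \circ P_I^* \circ A_f)v$ dominates $w$ in absolute value pointwise, so the $L^2$ bound on $w$ is immediate from Lemma~\ref{H1-preserve-gen}.

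The substantive claims are $w \in H^1(\Omega)$ and $\rho_f^{-1} w \in L^2(\Omega)$. For the weighted $L^2$ bound, I would write $\rho_f^{-1} w = \rho_g^{-1} A_f v(P_I\lambda_f(\cdot)) = \rho_g^{-1} z$ and apply Lemma~\ref{decay-preserve} directly: it gives
\[
\int_\Omega \rho_g^{-2}(x)|A_fv(P_I\lambda_f(x))|^2\,dx \le c\Big[\int_\Omega \rho_g^{-2}(x)|v(x)|^2\,dx + \|\grad v\|_0^2\Big],
\]
and the right-hand side is finite precisely because of the standing hypothesis $\rho_g^{-1} v \in L^2(\Omega_f)$ (extended by zero, since $z$ and the whole estimate are supported in $\overline{\Omega_f^e}$, and more to the point $w$ is supported in $\overline{\Omega_f^e}$ as well — one checks that outside $\Omega_f^e$ one has $\lambda_f \equiv 0$ so $A_fv(P_I\lambda_f) = A_fv(0)$, a constant, times $\rho_f/\rho_g \equiv 1$, so $w$ is constant there; I would need to be a little careful about this and perhaps restrict attention to $\Omega_f^e$ or note the constant is absorbed). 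So the weighted bound reduces to Lemma~\ref{decay-preserve} once the bookkeeping about supports is settled.

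For $w \in H^1(\Omega)$, the natural route is to show $z = A_fv(P_I\lambda_f(\cdot)) \in H^1(\Omega)$ by Lemma~\ref{H1-preserve-gen}, and then argue that multiplication by the Lipschitz function $\rho_f/\rho_g$ keeps us in $H^1$. The product rule gives $\grad w = \frac{\rho_f}{\rho_g}\grad z + z\,\grad\!\big(\frac{\rho_f}{\rho_g}\big)$; the first term is in $L^2$ since $\rho_f/\rho_g \le 1$ and $\grad z \in L^2$, while the second term is the delicate one: $\grad(\rho_f/\rho_g)$ is bounded by $C/\rho_g$ near $g$ (since $\rho_g$ vanishes there), so one needs $z/\rho_g \in L^2$ — which is exactly the weighted estimate just proved. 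This is the main obstacle: verifying that $\rho_f/\rho_g$ is Lipschitz on each simplex of $\Th$ away from $g$ with gradient controlled by $\rho_g^{-1}$, and that the apparent singularity of $\nabla(\rho_f/\rho_g)$ along $\partial\Omega_f$ (where $\rho_g$ may vanish) is tamed by the decay of $z$ there. I would handle this by working simplex-by-simplex: on each $T \in \Th$, $\rho_f$ and $\rho_g$ are affine, $\rho_f \le \rho_g$, and $\rho_f$ vanishes wherever $\rho_g$ does (since $g \subset f$ means the zero set of $\rho_g$ contains that of... actually the containment goes the other way — $f \subset \Omega_g$, need care), so $\rho_f/\rho_g$ extends to a Lipschitz function on $T$ with $|\grad(\rho_f/\rho_g)| \le C\rho_g^{-1}$; combining with $z/\rho_g \in L^2(\Omega)$ and a density argument (approximating $v$ by smooth functions, for which all operations are classical, then passing to the limit using the $L^2$, weighted-$L^2$, and $H^1$ bounds already in hand) yields $w \in H^1(\Omega)$ with the desired norm control.
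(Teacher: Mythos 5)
Your proposal is correct and follows essentially the same route as the paper: $\rho_f/\rho_g\le 1$ plus Lemma~\ref{H1-preserve-gen} for the $L^2$ bound, Lemma~\ref{decay-preserve} for the weighted estimate $\rho_f^{-1}w=\rho_g^{-1}A_fv(P_I\lambda_f(\cdot))\in L^2$, and then the Leibniz rule together with $|\grad(\rho_f/\rho_g)|\le c\,\rho_g^{-1}$ (which the paper gets from the identity $\grad(\rho_f/\rho_g)=\rho_g^{-1}(\grad\rho_f-\tfrac{\rho_f}{\rho_g}\grad\rho_g)$) to absorb the singular term via the weighted bound just proved. The hesitations in your write-up about supports and about which zero set contains which are resolved exactly as you suspect, so no essential gap remains.
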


\begin{proof}
Since $g \in \Delta(f)$, $\rho_f/\rho_g \le 1$. Therefore,  it follows
directly from Lemma~\ref{H1-preserve-gen}
that $w \in L^2(\Omega)$. We also have from Lemma~\ref{decay-preserve}
that
\begin{align*}
\int_{\Omega}|\rho_f^{-1}w|^2 \, dx
&= \int_{\Omega}|\rho_g^{-1}A_fv(P_I\lambda_f(x))|^2 \, dx
\\
&\le c \Big[\int_{\Omega_f}|\rho_g^{-1}v(x)|^2 \, dx + \| \grad v
\|_{0,\Omega_f}^2\Big] < \infty,
\end{align*}
so the desired decay property of $w$ follows.
It remains to show that $w \in H^1(\Omega)$.
From the identity
\[
\grad (\rho_f/\rho_g) = \rho_g^{-1}(\grad \rho_f -
\frac{\rho_f}{\rho_g}\grad \rho_g),
\]
we obtain that $|\grad (\rho_f/\rho_g)| \le c_0 \rho_g^{-1}$, where
$c_0 = c_0(\Omega, \T)$.
Therefore, we can conclude that
\[
\int_{\Omega_f} |(\grad(\rho_f/\rho_g))A_fv(P_I\lambda(x))|^2 \, dx \le
c_0^2\int_{\Omega_f}|\rho_g^{-1}A_fv(P_I\lambda(x))|^2 \, dx.
\]
Together with Leibnitz' rule and the result of Lemma~\ref{H1-preserve-gen},
this will imply that $w \in H^1(\Omega)$.
This completes the proof.
\end{proof}

\begin{lem}\label{prop-2}
Let $f \in \Delta_m(\T)$ with $x_0 \in \Delta_0(f)$.
Assume that $v \in H^1(\Omega_f)$,
with the property that $\rho_g^{-1}v
\in L^2(\Omega_f)$ for all $g \in \Delta_j(f)$, $j< m$.
Then $\lambda_0^{-1} C_f v \in L^2(\Omega)$.
\end{lem}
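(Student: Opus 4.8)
The plan is to use the explicit formula \eqref{B-def-3} for $C_f v$ and the recursion for $K_m$ to reduce the weighted bound $\lambda_0^{-1} C_f v \in L^2(\Omega)$ to the estimates already collected in Section~\ref{tools}. First I would note that since $\lambda_0$ is supported in $\Omega_{x_0}$ and $C_f v$ vanishes outside $\Omega_f^e$, the integrand $\lambda_0^{-2}|C_f v|^2$ is supported in $\Omega_{x_0}$, so it suffices to bound $\int_{\Omega_{x_0}} \lambda_0^{-2}|C_f v|^2\,dx$. The key algebraic observation is that, because $\lambda_0$ vanishes on $\S_{m,0}$, the terms of $K_m$ must be grouped so that the weight $\lambda_0^{-1}$ is compensated. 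Concretely, I would split the index set $\I_m$ into the pairs $\{I, I'\}$ with $I' = (0,I)$ for $I \in \I_m$ with $0 \notin I$, and rewrite
\[
C_f v(x) = \sum_{\substack{I \in \I_m \\ 0 \notin I}} (-1)^{|I|}\Big[\frac{\rho_f(x)}{\rho_{f(I)}(x)}A_f v(P_I\lambda_f(x)) - \frac{\rho_f(x)}{\rho_{f(I')}(x)}A_f v(P_{I'}\lambda_f(x))\Big].
\]
For each such pair, I would add and subtract a common term, writing the bracket as
\[
\frac{\rho_f}{\rho_{f(I)}}\big(A_f v(P_I\lambda_f) - A_f v(P_{I'}\lambda_f)\big) + \Big(\frac{\rho_f}{\rho_{f(I)}} - \frac{\rho_f}{\rho_{f(I')}}\Big)A_f v(P_{I'}\lambda_f),
\]
and bound the two pieces separately after multiplying by $\lambda_0^{-1}$.

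For the first piece, since $\rho_f/\rho_{f(I)} \le 1$, it is enough to control $\int_{\Omega_{x_0}} \lambda_0^{-2}|A_f v(P_I\lambda_f) - A_f v(P_{I'}\lambda_f)|^2\,dx$, and this is exactly Lemma~\ref{decay-cancel}, which gives a bound by $c\|\grad v\|_{0,\Omega_f}^2$. For the second piece, the point is that $g(I) := f(I)$ and $f(I') = f(I) \cap \{x_0\}$, so $f(I')$ is the subsimplex of $f(I)$ obtained by also dropping the vertex $x_0$; hence $\rho_{f(I')} = \rho_{f(I)} + \lambda_0$ on the relevant region, and a direct computation gives
\[
\frac{\rho_f}{\rho_{f(I)}} - \frac{\rho_f}{\rho_{f(I')}} = \frac{\rho_f \lambda_0}{\rho_{f(I)}\rho_{f(I')}},
\]
so that $\lambda_0^{-1}$ times this difference is bounded by $\rho_{f(I')}^{-1}$ (using $\rho_f/\rho_{f(I)} \le 1$). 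Therefore the second piece is controlled by $\int_{\Omega} \rho_{f(I')}^{-2}|A_f v(P_{I'}\lambda_f(x))|^2\,dx$, which is precisely the left-hand side of Lemma~\ref{decay-preserve} with $g = f(I')$; that lemma bounds it by $c\big[\int_{\Omega_f}\rho_{f(I')}^{-2}|v|^2\,dx + \|\grad v\|_0^2\big]$, and the first term here is finite by the hypothesis $\rho_g^{-1}v \in L^2(\Omega_f)$ for all $g \in \Delta_j(f)$, $j < m$ (note $f(I')$ has dimension $< m$ since $0 \in I'$).

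Summing the finitely many pairs $\{I,I'\}$ and applying the triangle inequality in $L^2$ then yields $\lambda_0^{-1}C_f v \in L^2(\Omega)$ with a bound depending only on $\Omega$, $\T$, $\|\grad v\|_{0,\Omega_f}$, and the weighted norms $\|\rho_g^{-1}v\|_{0,\Omega_f}$. The main obstacle I anticipate is the bookkeeping in the pairing step: one must check carefully that $f(I')$ really is the subsimplex $f(I)$ with $x_0$ removed and that the algebraic identity for the difference of the rational coefficients holds on all of $\Omega$ (including where some of the $\rho$'s equal $1$), and one must verify the gradient term produced by differentiating $\rho_f/\rho_{f(I)}$ against $A_f v$ in the first piece is likewise absorbed — though for the $L^2$ statement at hand no gradient of $C_f v$ is needed, so this last concern does not actually arise here, and the pairing/identity step is genuinely the only delicate point.
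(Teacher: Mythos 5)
Your argument for $m<n$ is essentially identical to the paper's proof: the same pairing of $I$ with $I'=(0,I)$, the same add-and-subtract identity (your formula $\rho_{f(I')}=\rho_{f(I)}+\lambda_0$ is exactly what produces the paper's term $\frac{\rho_f}{\rho_g\rho_{g'}}A_fv(P_{I'}\lambda_f)$), Lemma~\ref{decay-cancel} for the difference term, and Lemma~\ref{decay-preserve} for the remainder, with the observation that $f(I')$ has dimension $<m$ so the hypothesis applies. The one omission is the case $m=n$, which the lemma's statement does not exclude and for which the representation \eqref{B-def-3} you rely on is not available; there $C_fv=v|_f$, and the claim follows in one line from the hypothesis applied to $g=[x_1,\ldots,x_n]$, since $\rho_g=\lambda_0$. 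Adding that sentence makes your proof complete and matches the paper's.
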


\begin{proof}
Assume first that $m <n$.
Let $I \in \I_m$ be any index set such that $0 \notin I$.
Furthermore, let $I' = (0,I) \in \I_m$.
In other words, $x_0 \in \Delta(g)$ while $x_0 \notin \Delta(g')$,
where $g = f(I)$ and $g' = f(I')$.
The desired result will follow if we can show that
\begin{multline*}
\lambda_0^{-1} \Big[\frac{\rho_f}{\rho_g}A_fv(P_I\lambda_f(\cdot ))
- \frac{\rho_f}{\rho_{g'}} A_fv(P_{I'}\lambda_f(\cdot ))\Big]
\\
= \lambda_0^{-1} \frac{\rho_f}{\rho_g}\Big[A_fv(P_I\lambda_f(\cdot ))
-A_fv(P_{I'}\lambda_f(\cdot ))\Big]
+ \frac{\rho_f}{\rho_g\rho_{g'}}A_fv(P_{I'}\lambda_f(\cdot ))
\in L^2(\Omega).
\end{multline*}
However, Lemma~\ref{decay-cancel} and the fact that $\rho_f/\rho_g \le
1$ implies that the first term on the right hand side is in
$L^2$. Furthermore,
it follows by assumption that $\rho_{g'}^{-1}v \in L^2$, and therefore
Lemma~\ref{decay-preserve}
implies that the second term is in $L^2$.

If $m=n$, then we recall that $C_f v$ is just $v$ restricted to $f$.
If $f =[x_0,x_1, \ldots ,x_n]$ and $g = [x_1, \ldots ,x_n]$, then
$\rho_g^{-1} v = \lambda_0^{-1} v \in L^2$
by assumption. This completes the proof.
\end{proof}

\begin{lem}\label{prop-3}
Let $f = [x_0.x_1,\ldots ,x_m] \in \Delta_m(\T)$
and assume that $v \in H^1(\Omega_f)$,
with the property that $\rho_g^{-1}v
\in L^2(\Omega_f)$ for $g \in \Delta_j(f)$, $j< m$.
Define $w = C_f v$.
Then $w|_{\Omega_f} \in \0 H^1(\Omega_f)$ and $w \equiv 0$ on
$\Omega \setminus \Omega_f$.
\end{lem}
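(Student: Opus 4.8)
The plan is to combine the structural information already accumulated about the cut-off operator $C_f$: that $\supp(C_f v)$ lies in the closure of $\Omega_f^e$, that $C_f v$ preserves traces on $f$, and that the various rational pullback terms $\frac{\rho_f}{\rho_{f(I)}}A_f v(P_I\lambda_f(\cdot))$ are individually well-behaved. First I would reduce to the main case $m<n$; the case $m=n$ is trivial since $C_f v = v|_f$ and the only boundary of $\Omega_f = f$ is $\partial f$, where the vanishing is immediate from the hypothesis that $\rho_g^{-1}v \in L^2$ for faces $g$ of $f$ (this is precisely where the argument of Lemma~\ref{prop-2}, applied at $m=n$, does the work). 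For $m<n$, the first point is that $w = C_f v$ vanishes on $\Omega\setminus\Omega_f$: for $x\notin\Omega_f^e$ this is the stated support property, while for $x\in\Omega_f^e\setminus\Omega_f$ one checks that each term $\rho_f(x)/\rho_{f(I)}(x)\cdot A_fv(P_I\lambda_f(x))$ telescopes to zero using the cancellation built into $K_m$ on $\partial\S_m^c\setminus\S_m$ — exactly the pairing of $I$ and $I'=I\cup\{i\}$ used in Section~\ref{trace-preserve}, now transported to $\Omega$ via $\lambda_f^*$, since the point $\lambda_f(x)$ lies on some face $\S_{m,i}$ when $x$ misses $\Omega_{x_i}$.

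Next I would show $w|_{\Omega_f}\in H^1(\Omega_f)$. Write $C_f v = \sum_{I\in\I_m}(-1)^{|I|}\frac{\rho_f}{\rho_{f(I)}}A_fv(P_I\lambda_f(\cdot))$. For each $I$, set $g=f(I)$; the hypothesis gives $\rho_g^{-1}v\in L^2(\Omega_f)$, so Lemma~\ref{prop-1} applies (with $\Omega_f$ in the role of the ambient domain, or by restriction of the global statement) and yields that each summand lies in $H^1(\Omega)$, hence a fortiori in $H^1(\Omega_f)$. Summing over the finitely many $I\in\I_m$ gives $w\in H^1(\Omega_f)$; combined with the vanishing outside $\Omega_f$ and the definition $\0H^1(\Omega_f)=\{v\in H^1(\Omega_f)\mid \0E_f v\in H^1(\Omega)\}$, the only remaining issue is whether gluing the $H^1$ function $w|_{\Omega_f}$ to zero across $\partial\Omega_f$ produces a global $H^1$ function with no spurious interface contribution.

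That gluing step is the heart of the matter, and I expect it to be the main obstacle. The clean way to handle it is to observe that $\partial\Omega_f\setminus\partial\Omega$ is a union of faces contained in hyperplanes $\{\lambda_{x_j}=0\}$ or, more precisely, portions where some $\rho_g$ vanishes for an appropriate $g$, and to use the decay of $w$ encoded by $\rho_f^{-1}w\in L^2$ (from Lemma~\ref{prop-1}) together with $\lambda_0^{-1}C_f v\in L^2(\Omega)$ (from Lemma~\ref{prop-2}, whose hypotheses match ours exactly) for each vertex $x_0$ of $f$. A Hardy-type inequality argument then shows that an $H^1(\Omega_f)$ function whose quotient by the distance-like function $\rho_f$ — and by each $\lambda_{x_j}$ — is still $L^2$ must have vanishing trace on the relevant part of $\partial\Omega_f$, so its zero extension has no jump and is globally $H^1$. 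Concretely: the global gradient of $\0E_f w$ is the $L^2$ function equal to $\grad w$ inside and $0$ outside, and the weighted $L^2$ control of $w$ near $\partial\Omega_f$ is exactly what rules out a singular (surface-measure) distributional part. I would phrase this last deduction as: $\0E_f w\in H^1(\Omega)$ because $w\in H^1(\Omega_f)$ and the trace of $w$ on $\partial\Omega_f\setminus\partial\Omega$ vanishes, the latter following from the integrability of $\rho_f^{-1}w$ and $\lambda_0^{-1}w$ near that boundary. This closes the proof.
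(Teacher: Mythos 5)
Your proposal is correct and follows essentially the same route as the paper: $H^1(\Omega_f)$ membership of each summand via Lemma~\ref{prop-1}, the decay $\lambda_0^{-1}w \in L^2$ from Lemma~\ref{prop-2} applied at each vertex of $f$ (together with the reduction $\0 H^1(\Omega_f) = \cap_j \0 H^1(\Omega_{x_j})$), and the telescoping cancellation of $K_m$ on $\partial \S_m^c \setminus \S_m$ for the vanishing on $\Omega_f^e \setminus \Omega_f$. The one step you defer to ``a Hardy-type inequality argument'' is implemented in the paper by the explicit approximation $v_\eps = \phi_{\eps}(\lambda_0(\cdot))\,w$ with cutoffs satisfying \eqref{phi'-est}, showing $v_\eps \to w$ in $H^1(\Omega_{x_0})$ and invoking completeness of $\0 H^1(\Omega_{x_0})$ --- which is precisely the standard proof of the fact you invoke, so there is no gap.
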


\begin{proof}
We first observe that $w|_{\Omega_f} \in H^1(\Omega_f)$. This is
obvious if $m = n$, while for $m <n$ it follows from
Lemma~\ref{prop-1} that all the terms in the series of $(K_m \circ
A_f)v(\lambda_f(\cdot))$ have this property.  To show that $w \in \0
H^1(\Omega_f)$, it is enough to show that for any vertex $x_0$ of $f$,
$w \in \0 H^1(\Omega_{x_0})$. Since the numbering of the vertices of
$f$ is arbitrary, this will in fact imply that
\[
w \in \cap_{j=0}^m \0 H^1(\Omega_{x_j}) = \0 H^1(\Omega_f).
\]
However, the property that $w \in \0 H^1(\Omega_{x_0})$
is a consequence of the decay results expressed in
Lemmas~\ref{prop-2}, i.e., that $\lambda_0^{-1}w \in L^2$.
For any $\eps > 0$, let $\phi_{\eps}$ be a smooth function on $\R$ such that
$\phi_{\eps} \equiv 0$ on $(-\eps/2,\eps/2)$, $\phi_{\eps} \equiv 1$
on the complement of $(-\eps,\eps)$, and such that
$\phi'_{\eps}(\lambda)\lambda$ is uniformly bounded, i.e.,
\begin{equation}\label{phi'-est}
|\phi'_{\eps}(\lambda)| \le c/|\lambda|, \quad
\frac{\eps}{2} \le |\lambda| \le \eps,
\end{equation}
for some constant $c$.
By construction, the functions $v_{\eps}\equiv  \phi_{\eps}(\lambda_0(\cdot)) w$
are in $\0 H^1(\Omega_{x_0})$,
and to show that $w$ belongs to the same space, it is enough to show
that the $v_{\eps} $ converge to $w$, as $\eps$ tends to zero,
in $H^1(\Omega_{x_0})$. However,
\[
\int_{\Omega_{x_0}}|v_{\eps} - w|^2 \, dx =
\int_{\Omega_{x_0}}|(\phi_{\eps}(\lambda_0(\cdot)) - 1)w|^2 \, dx \le
\int_{\Omega_{x_0,\eps}}
|w|^2 \, dx \rightarrow 0,
\]
where $\Omega_{x_0,\eps} =
\{x \in \Omega_{x_0} \, | \, \lambda_0(x) \le \eps \, \}$.
This shows the $L^2$ convergence.
Furthermore,
\[
\int_{\Omega_{x_0}}|\grad (v_{\eps} - w)|^2 \, dx
\le 2 \int_{\Omega_{x_0,\eps}}|\grad w|^2 \, dx
+ 2 \int_{\Omega_{x_0,\eps}}|(\grad (\phi_{\eps}(\lambda_0(\cdot))) w|^2 \, dx.
\]
The first term goes to zero by the $H^1$ boundedness of $w$,
and, as a consequence of \eqref{phi'-est} and the $L^2$ property of
$\lambda_0^{-1}w$ established in Lemma~\ref{prop-2}, the second
term goes to zero with $\eps$.
By completeness of $\0 H^1(\Omega_{x_0})$, it follows that $w \in \0
H^1(\Omega_{x_0})$ and therefore it is in $\0 H^1(\Omega_f)$.

We recall from the definition of the operator $C_f$ that
$w$ is identically zero on $\Omega \setminus
\Omega_f^e$. Hence, it remains to show that $w$ is identically zero
on $\Omega_f^e \setminus \Omega_f$ when  $m <n$.
However, at each point in $\Omega_f^e \setminus \Omega_f$,
at least one of the extended barycentric
coordinates associated to $f$ is zero. Therefore, $w$ in this region
corresponds to
a pullback of $w$ from $\partial \S_m^c\setminus \S_m$,
and this is zero since $\tr_{\partial \Omega_f}w = 0$.
\end{proof}

\begin{lem}\label{prop-4}
Let $u \in H^1(\Omega)$ and define the functions $u^m$, $0 \le m \le
n$, by \eqref{B-def-1}. Then $u^m \in H^1(\Omega)$
and $\rho_f^{-1} u^m \in L^2(\Omega)$
for all $f \in \Delta_j(\T)$, $j<m$.
\end{lem}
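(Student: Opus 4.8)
The plan is to prove both assertions at once by induction on $m$, using the recursion $u^0=u$, $u^{m+1}=u^m-\sum_{g\in\Delta_m(\T)}C_gu^m$. The inductive statement $P(m)$ is: $u^m\in H^1(\Omega)$ and $\rho_f^{-1}u^m\in L^2(\Omega)$ for every $f\in\Delta_j(\T)$ with $j<m$. The base case $P(0)$ is trivial, since $u^0=u\in H^1(\Omega)$ and there are no simplices of negative dimension. Assume $P(m)$ with $0\le m<n$; I must establish $P(m+1)$.

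For the $H^1$ bound, I apply Lemma~\ref{prop-3} to each $g\in\Delta_m(\T)$ with $v=u^m|_{\Omega_g}$: its hypotheses hold because $u^m\in H^1(\Omega_g)$ and, for every $h\in\Delta_{j'}(g)$ with $j'<m$, also $h\in\Delta_{j'}(\T)$, so $\rho_h^{-1}u^m\in L^2(\Omega)\subset L^2(\Omega_g)$; both come from $P(m)$. Thus $C_gu^m\in\0H^1(\Omega_g)$ and $C_gu^m\equiv0$ on $\Omega\setminus\Omega_g$, so $C_gu^m$ coincides with its own zero extension and hence lies in $H^1(\Omega)$; summing the finitely many $g$ gives $u^{m+1}\in H^1(\Omega)$.

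Next, the decay bound $\rho_f^{-1}u^{m+1}\in L^2(\Omega)$ for $f\in\Delta_j(\T)$, $j\le m$. The key elementary observation is: if $g\in\Delta_m(\T)$ has a vertex $z_0\notin\Delta_0(f)$, then on every $T\in\T$ containing $g$ we have $\rho_f|_T=\sum_{z\in\Delta_0(T)\setminus\Delta_0(f)}\lambda_z\ge\lambda_{z_0}|_T$, while on every $T$ not containing $g$ the function $C_gu^m$ vanishes (by the support property just proved). Hence $|\rho_f^{-1}C_gu^m|\le\lambda_{z_0}^{-1}|C_gu^m|$ a.e.\ on $\Omega$, and the right-hand side lies in $L^2(\Omega)$ by Lemma~\ref{prop-2}, applied to $g$, the vertex $z_0$, and $v=u^m$ (hypotheses checked exactly as above). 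Such a $z_0$ exists whenever $\dim f<\dim g=m$, and also whenever $\dim f=m$ but $f\ne g$, since in either case $\Delta_0(g)$ cannot be contained in $\Delta_0(f)$. Thus every term $\rho_f^{-1}C_gu^m$ with $g\ne f$ is in $L^2(\Omega)$; when $j<m$ this already finishes the case, since $\rho_f^{-1}u^m\in L^2(\Omega)$ by $P(m)$.

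The one genuinely delicate term is the diagonal $g=f$ in the case $j=m$; here no such $z_0$ exists and $\rho_f^{-1}C_fu^m$ is not itself in $L^2$. The remedy is to group it with $u^m$. Using \eqref{B-def-3} and separating the $I=\emptyset$ term $A_fu^m(\lambda_f(\cdot))$,
\[
\rho_f^{-1}\bigl(u^m-C_fu^m\bigr)=\rho_f^{-1}\bigl(u^m-A_fu^m(\lambda_f(\cdot))\bigr)-\sum_{I\in\I_m,\,I\ne\emptyset}(-1)^{|I|}\,\rho_{f(I)}^{-1}A_fu^m(P_I\lambda_f(\cdot)).
\]
The first term on the right is in $L^2(\Omega)$ by Lemma~\ref{decay-create} applied to $v=u^m\in H^1(\Omega)$ — exactly the estimate encoding the matching of the traces of $u^m$ and $A_fu^m(\lambda_f(\cdot))$ on $f$. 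For each nonempty $I$ the simplex $f(I)\in\Delta(f)$ has dimension $m-|I|<m$ (with $\rho_{f(I)}\equiv1$ in the degenerate case $I=\{0,\dots,m\}$), so $\rho_{f(I)}^{-1}u^m\in L^2(\Omega)$ by $P(m)$, and Lemma~\ref{prop-1} (equivalently Lemma~\ref{decay-preserve}) gives $\rho_{f(I)}^{-1}A_fu^m(P_I\lambda_f(\cdot))=\rho_f^{-1}\frac{\rho_f}{\rho_{f(I)}}A_fu^m(P_I\lambda_f(\cdot))\in L^2(\Omega)$. Adding the already-treated contributions $C_gu^m$, $g\in\Delta_m(\T)\setminus\{f\}$, yields $\rho_f^{-1}u^{m+1}\in L^2(\Omega)$, completing $P(m+1)$ and hence the induction. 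The main obstacle is this diagonal term: one must recognize that the object with the right decay is the difference $u^m-C_fu^m$, whose trace on $f$ vanishes, and that Lemma~\ref{decay-create} is precisely the tool capturing that cancellation; everything else is careful bookkeeping of the hypotheses of the lemmas of Section~\ref{tools}.
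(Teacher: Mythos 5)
Your proof is correct and follows essentially the same route as the paper: Lemma~\ref{prop-3} for the $H^1$ part, the vertex comparison $\lambda_{z_0}\le\rho_f$ together with Lemma~\ref{prop-2} for the off-diagonal terms $C_gu^m$, $g\ne f$, and the splitting of $\rho_f^{-1}(u^m-C_fu^m)$ via Lemma~\ref{decay-create} and Lemma~\ref{prop-1} for the diagonal term. The only cosmetic difference is that the paper proves the decay estimate for $f\in\Delta_m(\T)$ and then passes to lower-dimensional faces via $\rho_g^{-1}\le\rho_f^{-1}$ for $g\in\Delta(f)$, whereas you treat all $f\in\Delta_j(\T)$, $j\le m$, directly; both are fine.
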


\begin{proof}
The proof goes by induction on $m$. For $m=0$ the result holds with
$u^0 = u$. Furthermore, if the result holds for a given $m <n$, then
$u^{m+1}\in H^1(\Omega)$ by Lemma~\ref{prop-3}.
It remains to show the decay property, i.e., that $\rho_f^{-1}u^{m+1}
\in L^2(\Omega)$ for all $f \in \Delta_j(\T)$ for $j \le m$.
For any $f \in \Delta_m(\T)$ we have
\begin{multline*}
\rho_f^{-1}(u^m - C_f u^m)
\\
= \rho_f^{-1}[u^m - A_fu^m(\lambda_f(\cdot ))]
 - \rho_f^{-1}\sum_{\stackrel{I \in \I_m}{I \neq \emptyset}}(-1)^{|I|}\frac{\rho_f}
{\rho_{f(I)}} A_f u^m(P_I\lambda(\cdot)).
\end{multline*}
However, the first term on the right side is in $L^2$ as a consequence of
Lemma~\ref{decay-create}, while Lemma~\ref{prop-1} and the induction
hypothesis implies that all
the terms in the sum are
in $L^2$. We can therefore conclude that for $f \in \Delta_m$,
$\rho_f^{-1}(u^m - C_f u^m)$ is in $L^2(\Omega)$.
To show that $\rho_f^{-1} u^{m+1}$ is in $L^2$, we express this as
\begin{equation}\label{representation}
\rho_f^{-1} u^{m+1} = \rho_f^{-1}(u^m - C_f u^m) + \sum_{\stackrel{g \in
    \Delta_m(\T)}{g \neq f}}\rho_f^{-1} C_g u^m.
\end{equation}
Recall that by definition, $C_g u^m$ is identically zero
outside $\Omega_g^e$. On the other hand, if $g \in
    \Delta_m(\T)$ and $g \neq f$, then on each $T \in \T$, such that
$f \cap T \neq \emptyset$ and $g \cap T \neq \emptyset$, there
exists a vertex $x_0 \in g\cap T$ which is not in $f$.
Then $\lambda_0 \le \rho_f$ on $T$, which implies that
\[
|\rho_f^{-1} C_g u^m| \le
|\lambda_0^{-1} C_g u^m|
\quad \text{on } T.
\]
By repeating this for all $T \subset \Omega_f^e$, and by
applying Lemma~\ref{prop-2},
we obtain that all the terms in the sum \eqref{representation} are in $L^2$.
Since $f \in \Delta_m(\T)$ is arbitrary, this shows the desired decay
result for all $f \in \Delta_m(\T)$. However, if $g \in \Delta(f)$, then
$\rho_g^{-1}(x) \le \rho_f^{-1}(x)$, and therefore $\rho_f^{-1}u^{m+1}
\in L^2$ for all $f \in \Delta_j(\T)$, $j\le m$.
This completes the induction argument and therefore the proof of the lemma.
\end{proof}

The following result shows that the transform satisfies properties (i)
and (ii) above.

\begin{thm}\label{local-prop-B}
Assume that $u \in H^1(\Omega)$. Then
$ u = \sum_{f \in \Delta(\T)}B_f u$, where  $B_fu \in \0
H^1(\Omega_f)$
for each $f \in \Delta(\T)$. Furthermore, the transformation
$\B_{\T} : H^1(\Omega) \to \bigoplus_{f \in \Delta(\T)}\0 H^1(\Omega_f)$,
with components $B_f$, is bounded.
\end{thm}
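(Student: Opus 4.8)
The plan is to assemble Theorem~\ref{local-prop-B} from the pieces already in place: Theorem~\ref{pol-preserve-B}, Lemmas~\ref{prop-1}--\ref{prop-4}, and Lemma~\ref{H1-preserve-gen}. The decomposition identity $u=\sum_f B_fu$ is essentially a definitional matter, already noted below \eqref{B-def-1}: since $C_f$ for $f\in\Delta_n(\T)$ is just restriction to $f$ and $u^{n+1}=u^n-\sum_{f\in\Delta_n(\T)}C_fu^n=0$, telescoping the recursion $u^{m+1}=u^m-\sum_{f\in\Delta_m(\T)}B_fu$ gives $u=\sum_{m=0}^n\sum_{f\in\Delta_m(\T)}B_fu$. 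So the first step is just to invoke this.

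The second step is to show $B_fu\in\0 H^1(\Omega_f)$ for each $f$. By \eqref{B-def-1}, $B_fu=C_fu^m$ where $m=\dim f$, so I would apply Lemma~\ref{prop-3} with $v=u^m$. Lemma~\ref{prop-3} requires $u^m\in H^1(\Omega_f)$ together with the decay hypothesis $\rho_g^{-1}u^m\in L^2(\Omega_f)$ for all $g\in\Delta_j(f)$, $j<m$ — and this is exactly the conclusion of Lemma~\ref{prop-4}. Hence $B_fu=C_fu^m$ restricted to $\Omega_f$ lies in $\0 H^1(\Omega_f)$ and vanishes on $\Omega\setminus\Omega_f$, which is the claim. (For $f\in\Delta_n(\T)=\T$ the same works, with Lemma~\ref{prop-4} giving $\rho_g^{-1}u^n\in L^2$ for the facets $g$ of $T$, so $B_Tu=u^n|_T\in\0\P$-type space; more simply, $\tr_{\partial T}u^n=0$ in the trace sense follows from the decay, so $B_Tu\in\0 H^1(T)$.)

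The third step is the boundedness estimate $\sum_f\|B_fu\|_{1,\Omega_f}^2\le c\|u\|_1^2$. Here the strategy is to bound each ingredient using $L^2$- and $H^1$-boundedness of $\lambda_f^*\circ P_I^*\circ A_f$ from Lemma~\ref{H1-preserve-gen}, together with $\rho_f/\rho_{f(I)}\le1$ and the gradient estimate $|\grad(\rho_f/\rho_{f(I)})|\le c_0\rho_{f(I)}^{-1}$ used already in Lemma~\ref{prop-1}. Concretely, from \eqref{B-def-3}, $B_fu=C_fu^m$ is a finite sum over $I\in\I_m$ of terms $(\rho_f/\rho_{f(I)})A_fu^m(P_I\lambda_f(\cdot))$; applying Leibniz, the $L^2$-norm of each such term and its gradient is controlled by $\|A_fu^m(P_I\lambda_f(\cdot))\|_0$, $\|\grad[A_fu^m(P_I\lambda_f(\cdot))]\|_0$, and $\|\rho_{f(I)}^{-1}A_fu^m(P_I\lambda_f(\cdot))\|_0$. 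The first two are bounded by $c\|u^m\|_{1,\Omega_f}$ via Lemma~\ref{H1-preserve-gen}, and the last by $c(\|\rho_{f(I)}^{-1}u^m\|_{0,\Omega_f}+\|\grad u^m\|_{0,\Omega_f})$ via Lemma~\ref{decay-preserve}. It then remains to control, by induction on $m$, the quantities $\|u^m\|_1$ and $\|\rho_g^{-1}u^m\|_0$ that appear: from $u^{m+1}=u^m-\sum_{f\in\Delta_m(\T)}C_fu^m$ and the local (finite-overlap) nature of the supports $\Omega_f^e$, each passage $m\to m+1$ costs only a constant factor depending on $\T$, with the decay bound $\|\rho_f^{-1}(u^m-C_fu^m)\|_0$ handled exactly as in Lemma~\ref{prop-4} (Lemmas~\ref{decay-create}, \ref{prop-1}, \ref{prop-2}). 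Summing the finitely many levels $m=0,\dots,n$ and the finitely many $f\in\Delta_m(\T)$ then yields the global bound.

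\textbf{Main obstacle.} The routine part is $u=\sum_fB_fu$ and $B_fu\in\0 H^1(\Omega_f)$, which are immediate from the prior lemmas. The delicate bookkeeping is the boundedness estimate: one must track not just the $H^1$-norms of the intermediate functions $u^m$ but also their weighted norms $\|\rho_g^{-1}u^m\|_0$, since these feed into the next level's estimate through Lemma~\ref{decay-preserve}, and one must verify that the recursion closes with $\T$-dependent constants only — i.e. that no dependence on the polynomial degree or on a finite element space sneaks in. This is precisely where the locality of the operators $B_f$ (support in $\overline{\Omega_f^e}$, bounded overlap) is essential, so I would state the inductive claim as: there is $c=c(\Omega,\T)$ with $\|u^m\|_1^2+\sum_{g\in\Delta_j(\T),\,j<m}\|\rho_g^{-1}u^m\|_0^2\le c\|u\|_1^2$, prove it by induction using the cited lemmas, and then read off $\sum_f\|B_fu\|_{1,\Omega_f}^2\le c\|u\|_1^2$.
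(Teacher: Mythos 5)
Your proposal is correct and follows essentially the same route as the paper: the identity $u=\sum_f B_fu$ from the recursion and the definition of $C_f$ for $\dim f=n$, membership $B_fu\in\0 H^1(\Omega_f)$ from Lemmas~\ref{prop-3} and \ref{prop-4}, and boundedness by tracing the quantitative estimates of Lemmas~\ref{prop-1}--\ref{prop-4} together with the finite overlap of the covering $\{\Omega_f\}$. The paper's proof is in fact terser than yours; your explicit inductive claim controlling $\|u^m\|_1$ and the weighted norms $\|\rho_g^{-1}u^m\|_0$ is exactly the bookkeeping the paper leaves implicit.
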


\begin{proof}
We have already seen that
$ u = \sum_{f \in \Delta(\T)} B_f u$.
Furthermore, it is a consequence of Lemmas~\ref{prop-3} and
\ref{prop-4} that each $B_f u \in \0 H^1(\Omega_f)$.
Finally, the boundedness of the transformation can be seen by
tracing the bounds derived in Lemmas~\ref{prop-1}--\ref{prop-4}
and by utilizing the finite overlap property of the covering
$\{\Omega_f \}$ of $\Omega$.
\end{proof}

\begin{cor}\label{L2-bounded}
The transform $\B_{\T}$ is $L^2$ bounded, with $\supp B_fu$
contained in the closure of $\Omega_f$ for all $u \in
L^2(\Omega)$.
\end{cor}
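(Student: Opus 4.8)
The plan is to observe that no new estimate is needed beyond Lemma~\ref{H1-preserve-gen}: one simply re-runs the inductive construction of $\B_{\T}$ at the level of $L^2$ instead of $H^1$. First I would record the $L^2$ mapping property of a single cut--off operator $C_f$. For $f \in \Delta_m(\T)$ with $m<n$, the representation \eqref{B-def-3} exhibits $C_f v$ as a finite sum over $I \in \I_m$ of terms $(-1)^{|I|}(\rho_f/\rho_{f(I)})\,A_f v(P_I\lambda_f(\cdot))$. Since $f(I)\in\Delta(f)$ gives $0 \le \rho_f/\rho_{f(I)} \le 1$ pointwise, and since the $L^2$ half of Lemma~\ref{H1-preserve-gen} bounds each map $v \mapsto (\lambda_f^*\circ P_I^*\circ A_f)v$ from $L^2(\Omega_f)$ into $L^2(\Omega)$, summing the $2^{m+1}$ terms shows $C_f : L^2(\Omega_f) \to L^2(\Omega)$ is bounded. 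For $f \in \Delta_n(\T) = \T$ the operator $C_f v = v|_f$ is trivially bounded of norm one.

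Next I would propagate this through the recursion defining the transform. With $u^0 = u$ and $u^{m+1} = u^m - \sum_{f \in \Delta_m(\T)} C_f u^m$, an induction on $m$ that uses the previous step together with the finiteness of each $\Delta_m(\T)$ yields $u^m \in L^2(\Omega)$ with $\|u^m\|_0 \le c_m\|u\|_0$ for constants $c_m = c_m(\Omega,\T)$; hence $B_f u = C_f u^m \in L^2(\Omega)$ with $\|B_f u\|_0 \le c(\Omega,\T)\|u\|_0$ for every $f \in \Delta(\T)$. Combined with the identity $u = \sum_{f} B_f u$, already established, and the finiteness of $\Delta(\T)$, this gives the $L^2$-boundedness of $\B_{\T} : L^2(\Omega) \to \bigoplus_{f\in\Delta(\T)} L^2(\Omega_f)$.

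For the support statement I would exploit the algebraic vanishing of $K_m$ on $\partial\S_m^c\setminus\S_m$ established just before Lemma~\ref{pol-preserve-E}, noting that this cancellation is valid pointwise wherever $b(\lambda)>0$ and requires no trace hypothesis on the argument of $K_m$. Given $x \in \Omega\setminus\Omega_f$, some vertex $x_j$ of $f$ satisfies $x \notin \Omega_{x_j}$, so $\lambda_j(x) = 0$; moreover $x \notin f$ forces $\rho_f(x)>0$, hence $b(P_I\lambda_f(x)) \ge \rho_f(x) > 0$ for every $I \in \I_m$, so each $A_fv(P_I\lambda_f(x))$ is a genuine pointwise average and the pairing $I \leftrightarrow I\cup\{j\}$ cancels $K_m(A_fv)$ term by term. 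Thus $C_fv(x) = K_m(A_fv)(\lambda_f(x)) = 0$ for almost every $x \notin \Omega_f$ (the exceptional set being $\lambda_f^{-1}(\S_m) = f$, which is null), and therefore $\supp B_fu = \supp C_f u^m$ lies in the closure of $\Omega_f$; for $m=n$ this is immediate from $B_fu = u^n|_f$. I would remark that this is in fact sharper than the $\overline{\Omega_f^e}$ bound recorded for the $H^1$ transform in Section~\ref{main}, at no extra cost.

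The only obstacle is bookkeeping rather than substance: one must check that the pointwise/almost-everywhere manipulations behind the support claim are legitimate for merely $L^2$ data — which they are, precisely because the degeneracy locus of $A_f$ (where $b=0$) pulls back to the null set $f$ — and one must handle the edge case $m=n$, where $C_f$ is a restriction rather than an average, uniformly with the generic case.
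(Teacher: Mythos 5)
Your proof is correct, and the two halves of it compare differently with the paper. The $L^2$-boundedness part is essentially the paper's own argument: the paper's proof of the corollary simply recalls the observation made after \eqref{B-def-1} that $\rho_f/\rho_{f(I)}\le 1$ together with the $L^2$ half of Lemma~\ref{H1-preserve-gen} bounds each $C_f$, hence each $B_fu=C_fu^m$ through the finite recursion; your induction on $m$ just writes this out. Where you genuinely diverge is the support statement. The paper obtains it indirectly: for $u\in H^1(\Omega)$, Theorem~\ref{local-prop-B} gives $B_fu\in\0H^1(\Omega_f)$, so $B_fu$ vanishes a.e.\ outside $\Omega_f$, and this containment is transferred to all $u\in L^2(\Omega)$ by density of $H^1(\Omega)$ in $L^2(\Omega)$ combined with the $L^2$ continuity of $B_f$. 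You instead prove the vanishing directly from \eqref{B-def-3}: for $x\in\Omega\setminus\Omega_f$ some $\lambda_j(x)=0$, while $x\notin f$ gives $b(P_I\lambda_f(x))\ge\rho_f(x)>0$, so every $A_fv(P_I\lambda_f(x))$ is a finite average and the pairing $I\leftrightarrow I\cup\{j\}$ cancels the sum pointwise -- the same cancellation used in Section~\ref{trace-preserve}, now observed to be purely algebraic and valid for merely $L^2$ data (the degenerate sets $\{\rho_{f(I)}=0\}=f(I)\subset f$ are disjoint from $\Omega\setminus\Omega_f$, so no exceptional set even arises, and the $m=n$ case is trivial in both treatments). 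What each buys: your route is self-contained and makes transparent that the support containment in $\overline{\Omega_f}$ -- as opposed to membership in $\0H^1(\Omega_f)$, which really does need the decay lemmas -- owes nothing to the $H^1$ theory; the paper's route is shorter on the page because Theorem~\ref{local-prop-B} is already available and it avoids re-examining pointwise well-definedness of the rational terms for rough $v$, which is exactly the bookkeeping you correctly identify and discharge.
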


\begin{proof}
We have already seen that $\B_{\T}$ is $L^2$ bounded, and with $\supp B_fu$
contained in the closure of the extended macroelement $\Omega_f^e$. 
However, due to the result of 
Theorem~\ref{local-prop-B} and the density of $H^1(\Omega)$ in
$L^2(\Omega)$,
this implies that  $\supp B_fu$ is
contained in the closure of $\Omega_f$.
\end{proof}

\subsection{Construction of projections}\label{projections}
The result of Theorem~\ref{local-prop-B} leads immediately 
to the construction of locally defined projections into 
the finite element spaces 
$\P_r\Lambda^0(\T)$ which are uniformly bounded with respect to the
polynomial degree $r$. We just project each component $B_fu$ into the
space $\0\P_r\Lambda^0(\T_f)$ by a local projection $Q_{f,r}$. More
precisely, the locally defined global projections $\pi = \pi_{\T,r}$
will be of the form
\[
\pi u = \sum_{f \in \Delta_m(\T)}Q_{f,r} B_f u,
\]
where $Q_{f,r}$ is a local projection onto
$\0\P_r\Lambda^0(\T_f)$.
The operator $\pi$ will be a projection as a result of
Theorem~\ref{pol-preserve-B}.
If 
$Q_{f,r}$ is taken to be the local $H^1$-projection, with
corresponding operator norm equal to one, then
Theorem~\ref{local-prop-B}
implies that 
$\pi$ will be uniformly bounded in $H^1$ with respect to $r$.
On the other hand, if  $Q_{f,r}$ is taken to be the local
$L^2$-projection, then
Corollary~\ref{L2-bounded} implies uniform $L^2$ boundedness of
$\pi$
with respect to $r$.

\section{Proofs of Lemmas~\ref{H1-preserve-gen}--\ref{decay-cancel}}
\label{technical}
To complete the paper, it remains to establish
Lemmas~\ref{H1-preserve-gen}--\ref{decay-cancel},
all related to properties of the averaging operators $A_f$.
Let $f = [x_0,x_1, \ldots ,x_m] \in \Delta_m(\T)$ be as above.
Throughout this section we assume that $0 \le m < n$.
If $T \in \T_f$,  and $\lambda \in \S_m^c$, we also let
\[
A_{f,T}v(\lambda) = \aint_{T} v(G_m(\lambda,y))\, dy,
\]
such that
\[
A_fv = \sum_{T \in \T_f}\frac{|T|}{|\Omega_f|}A_{f,T}v.
\]
Before we derive more properties of the operator $A_f$
we will make some observations which will be useful below.
A simple calculation shows that for any $r \in \R$ we have
\begin{multline*}
\int_{\S_m^c} b(\lambda)^r \, d\lambda
= \int_{\S_{m-1}^c}\int_0^{b(\lambda')}(b(\lambda') - \lambda_m)^r \,
d\lambda_m \, d\lambda'
\\
= \int_{\S_{m-1}^c}\int_0^{b(\lambda')}z^r \, dz\, d\lambda'
= \int_0^{1}z^r \int_{z \le b(\lambda')} \, d\lambda' \, dz
= |\S_{m-1}^c| \int_0^{1}z^r(1 - z)^m \, dz.
\end{multline*}
Hence, we can conclude that
\begin{equation}\label{int-mu-s}
\int_{\S_m^c} b(\lambda)^r \, d\lambda < \infty, \quad
\text{for } r > -1.
\end{equation}
If $f = [x_0,x_1,\ldots x_m] \in \Delta_m(\T)$ and $T$ is an element of
$\T_{f}$, we let $f^*(T) \in \Delta_{n-m-1}(T)$
be the face opposite $f$. In other words, if $T = [x_0,x_1, \ldots
,x_n]$, then
\[
f^*(T) = [x_{m+1},\ldots , x_n] = \{ x \in T \, |\, \lambda_j(x) =
0,\, j=0,1,\ldots ,m\, \}.
\]
Any point $x \in T$ can be written uniquely as a convex combination
of $x_0, \ldots ,x_m$ and a point $q = q_f \in f^*(T)$, since
\[
x = \sum_{j=0}^n \lambda_j(x)x_j = \sum_{j=0}^m \lambda_j(x)x_j +
\rho_f(x)q_f(x),
\quad q_f(x) = \sum_{j=m+1}^n \lambda_j(x)x_j/\rho_f(x).
\]
Define $f^* = \cup_{T \in \T_{f}}f^*(T)$.
Then $f^* \subset \partial \Omega_f$, and any  $x \in \Omega_f$
can be written as
\begin{equation}\label{rep-Omega_f}
x = \sum_{j=0}^m \lambda_j(x)x_j +
\rho_f(x)q_f(x), \quad q_f(x) \in f^*.
\end{equation}
The set $f^*$ can alternatively be characterized as $f^* = \partial
\Omega_f^e \cap \partial \Omega_f$.
An illustration of the geometry of $f$, $\Omega_f$, and $f^*$
is given in
Figure~\ref{fig:macroelement} below.
In fact, if $m=n-1$, then $f^*$ consist of two vertices in
$\Delta_0(\T)$,
while if $m <n-1$, $f^*$ is a connected and piecewise flat manifold
of dimension $n-m-1$.

\begin{figure}[htb]
\setlength{\unitlength}{0.5cm}
\centering
\begin{picture}(15,15)
\multiput(0,8)(0.8,0){20}{\line(1,0){0.4}}
\put(0,8){\line(1,1){4}}
\put(0,8){\line(1,-1){6}}
\put(0,8){\line(5,3){10}}
\put(6,2){\line(1,3){4}}
\put(6,2){\line(5,3){10}}
\multiput(6,2)(-0.3,1.5){7}{\line(-1,6){0.12}}
\put(-0.85,8){$x_0$}
\put(16.2,8){$x_1$}
\put(3.5,12.5){$x_2$}
\put(9.5,14.25){$x_3$}
\put(6,1.5){$x_4$}
\put(16,8){\line(-1,1){6}}
\multiput(16,8)(-1.8,.6){7}{\line(-3,1){1.0}}
\put(4,12){\line(3,1){6}}
\put(12,7){$f$}
\put(6.5,13.25){$f^*$}
\end{picture}
\caption{The macroelement $\Omega_f \subset \R^3$, where $f$ is the line from
$x_0$ to $x_1$ and $f^*$ is the closed curve connecting $x_2,x_3,x_4$.}
\label{fig:macroelement}
\end{figure}
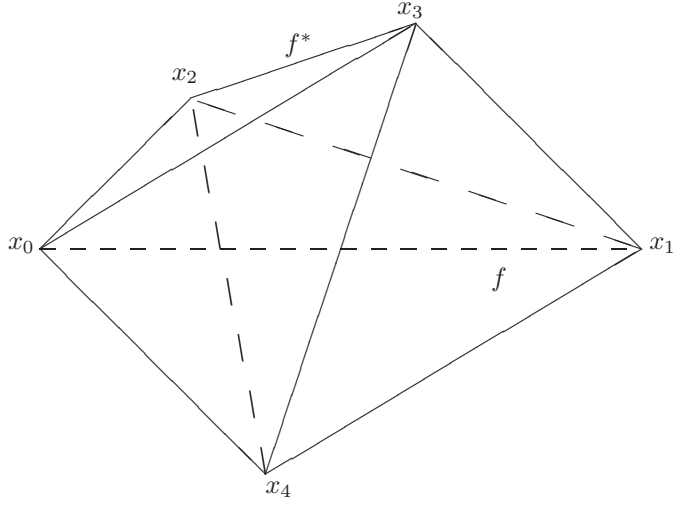

The map $x \mapsto (\lambda_f(x),q_f(x))$ defines a map from $\Omega_f$ to
$\S_m^c\times f^*$, with an inverse given by
\begin{equation}\label{polar-map}
(\lambda,q) \mapsto x
= q + \sum_{j=0}^m \lambda_j (x_j - q) = G_m(\lambda,q).
\end{equation}
The derivative of the map \eqref{polar-map} can be expressed as the
$n \times n$ matrix
\[
[x_0-q,x_1 -q, \ldots , x_m-q, b(\lambda) Q],
\]
where $Q$
is the piecewise constant  $n \times (n-m-1)$ matrix representing the
embedding of the tangent space of $f^*$
into $\R^n$. In other words, for each $T \in \T_f$ the columns of
$Q$ can be taken to be an
orthonormal basis for the tangent space of $f^*$ with respect to the
ordinary Euclidean inner product of $\R^n$.
Hence, by the scaling rule for determinants,
the determinant of this matrix is of the form
\[
b(\lambda)^{n-m-1}\det([x_0-q,x_1 -q, \ldots , x_m-q, Q])
:= b(\lambda)^{n-m-1}J(f,q).
\]
Furthermore, for a fixed mesh, the function $J(f,q)$ will be bounded
from above and below. In other words, there exist constants $c_i=
c_i(\Omega,\T)$,
such that
\begin{equation}\label{det-unif}
c_0 \le J(f,q) \le c_1, \quad f \in \Delta(\T),\, q \in f^*.
\end{equation}
The coordinates $(\lambda,q) \in \S_m^c\times f^*$ can be seen as
generalized polar coordinates for the domain $\Omega_f$.
The change of variables
\[
x \mapsto (\lambda_f(x),q_f(x)) \in \S_m^c\times f^*
\]
leads to the identity
\begin{equation}\label{polar-T}
\int_{T} \phi (\lambda_f(x),q_f(x)) \, dx =
\int_{\S_m^c} \int_{f^*(T)} \phi (\lambda,q) J(f,q)\, dq \,
b(\lambda)^{n-m-1} \, d\lambda,
\end{equation}
for any $T \in \T_f$, and any real valued function $\phi$ on $\S_m^c
\times f^*(T)$.
Furthermore, by summing over all $T \in \T_f$, we obtain
\begin{equation}\label{polar}
\int_{\Omega_f} \phi (\lambda_f(x),q_f(x)) \, dx =
\int_{\S_m^c} \int_{f^*} \phi (\lambda,q) J(f,q)\, dq \,
b(\lambda)^{n-m-1} \, d\lambda.
\end{equation}
Here the integral
over $f^*$ should be interpreted as a sum in the case $m=n-1$, when
$f^*$ consists of two points.

The function $G_m$ has the property that $G_m(\lambda_f(x),q_f(x)) = x$
and it satisfies the composition rule
\begin{equation}\label{compose}
G_m(\lambda, G_m(\mu,y)) = G_m(\lambda',y) \quad \text{where } \lambda' =
\lambda + b(\lambda)\mu.
\end{equation}
In particular, the matrix associated to the linear transformation
$\lambda \mapsto \lambda'$
is $(m+1) \times (m+1)$ given by $I - \mu e^T$, where $e$ denotes
the vector with all elements equal $1$, and this matrix has
determinant $b(\mu)$. Furthermore,
$b(\lambda') = b(\lambda)b(\mu)$.
Letting $y = G_m(\mu,q)$ and applying the identity \eqref{polar-T}
in the variable $y$, we can rewrite $A_{f,T}v(\lambda)$ as
\begin{equation}\label{A_f-polar}
A_{f,T}v(\lambda) = |T|^{-1} \int_{\S_m^c} \int_{f^*(T)} v
(G_m(\lambda,G_m(\mu,q)) J(f,q) \, dq \, b(\mu)^{n-m-1}\, d\mu,
\end{equation}
A key property, which is a special case of
Lemma~\ref{H1-preserve-gen},
is that the operator $\lambda_f^* \circ A_{f,T}$
is bounded in $L^2$. To see this, observe that we obtain from
\eqref{det-unif}, \eqref{polar}, \eqref{compose}, and Minkowski's
inequality  in the form
$\|\int g(\mu) \, d\mu \| \le \int \|g(\mu)\| \, d\mu$, that
\begin{align*}
\|&A_{f,T}v(\lambda_f(\cdot))\|_{0,\Omega_f} \\
&\le  c \int_{\S_m^c} \Big(\int_{\Omega_f} \int_{f^*(T)}|v(G(\lambda_f(x),G(\mu,q))|^2
\, dq \, dx\Big)^{1/2} b(\mu)^{n-m-1}\, d\mu\\
&\le  c \int_{\S_m^c} \Big(\int_{\S_m^c} b(\lambda)^{n-m-1}\int_{f^*(T)}
|v(G(\lambda,G(\mu,q))|^2
\, dq \, d\lambda\Big)^{1/2} {b(\mu)}^{n-m-1}\, d\mu\\
&\le c \int_{\S_m^c} \Big(\int_{\S_m^c} b(\lambda')^{n-m-1}
\int_{f^*(T)}|v(G(\lambda',q))|^2
\, dq \, d\lambda' \Big)^{1/2} {b(\mu)}^{-1 +(n-m)/2}\, d\mu,
\end{align*}
where we have substituted $\lambda' = \lambda+ b(\lambda)\mu$.
However, by letting $(\lambda',q) \mapsto x=G(\lambda',q)$,
we obtain from \eqref{polar-T} that
\begin{align*}
\|A_{f,T}v(\lambda_f(\cdot))\|_{0,\Omega_f} &\le c \int_{\S_m^c}(\int_{T} |v(x)|^2
\, dx)^{1/2} \, b(\mu)^{-1 +(n-m)/2}\, d\mu\\
&= c \|v \|_{0,T} \int_{\S_m^c} b(\mu)^{-1 +(n-m)/2}\, d\mu  \le c_1 \|v \|_{0,T},
\end{align*}
where we have used \eqref{int-mu-s} and the fact that the exponent
satisfies $-1 + (n-m)/2 \ge -1/2$.
This shows that the operator $\lambda_f^*\circ A_{f,T}$ is bounded as
an operator from $L^2(T)$ to $L^2(\Omega_f)$. Furthermore, if $T' \in
\Delta(\T)$
such that $T' \subset \Omega_f^e$, but $T' \notin \T_f$, we let $g=f
\cap T'$. Then $g \in \Delta(f)$ and $A_{f,T}v|_{T'} =
A_{g,T}v|_{T'}$.

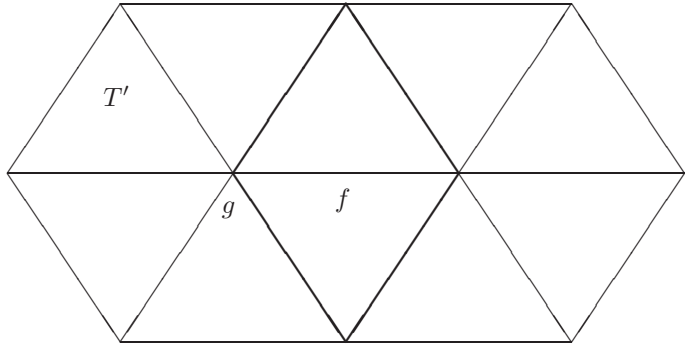
\begin{figure}[htb]
\setlength{\unitlength}{0.75cm}
\centering
\begin{picture}(4,4)
\put(0,0){\line(-2,3){2}}
\put(0,0){\line(-2,-3){2}}
\put(-4,0){\line(1,0){8}}
\put(4,0){\line(1,0){4}}
\put(4,0){\line(2,-3){2}}
\put(4,0){\line(2,3){2}}
\put(2,3){\line(1,0){4}}
\put(2,-3){\line(1,0){4}}
\put(8,0){\line(-2,3){2}}
\put(8,0){\line(-2,-3){2}}
\put(1.8,-.6){$f$}
\put(-2.3,1.2){$T'$}
\put(-.2,-.7){$g$}
\put(-4,0){\line(2,3){2}}
\put(-4,0){\line(2,-3){2}}
\put(-2,3){\line(1,0){4}}
\put(-2,-3){\line(1,0){4}}
\thicklines
\put(4,0){\line(-2,3){2}}
\put(4,0){\line(-2,-3){2}}
\put(0,0){\line(2,3){2}}
\put(0,0){\line(2,-3){2}}
\end{picture}
\vskip1in
\caption{The case when $T' \subset \Omega_f^e$, but
$T' \notin \T_f$ (enclosed in the thick lines). Here $g=f \cap T'$.}
\label{Tprime}
\end{figure}

By utilizing the argument just given with respect to $g$ instead of
$f$
we can conclude that $\lambda_f^* \circ A_{f,T}$ is bounded from
$L^2(T)$ to $L^2(\Omega_f^e)$. In particular, on the boundary of
$\Omega_f^e$,  $(\lambda_f^* \circ A_{f,T})v$ is constant with value
\[
A_{f,T}v(0) = \int_T v(y) \, dy.
\]
In fact, this is also the value of $(\lambda_f^* \circ A_{f,T})v$ in
$\Omega \setminus \Omega_f^e$, and we can therefore conclude that
$\lambda_f^* \circ A_{f,T}$ is bounded from
$L^2(T)$ to $L^2(\Omega)$. Since the operator $A_f$ is a weighted sum of the operators $A_{f,T}$,
we  can therefore conclude that $\lambda_f^* \circ A_{f}$ is bounded from
$L^2(\Omega_f)$ to $L^2(\Omega)$.

A completely analogous argument, essentially using that differentiation
commutes with averaging, also shows that $\lambda_f^* \circ A_{f}$ is
bounded from
$H^1(\Omega_f)$ to $H^1(\Omega)$.
We just observe that
\[
\grad A_{f,T}v(\lambda_f(\cdot )) = \aint_{T}  (DG_m)^T \grad v
(G_m(\lambda_f(\cdot ), y)) \, dy.
\]
Here $DG_m = DG_m(y)$ is the derivative of $G_m(\lambda_f(x),y)$ with
respect to $x$, given as the $n\times n$ matrix
\[
DG_m = \sum_{j=0}^m (x_j - y)(\grad \lambda_j)^T,
\]
and this matrix is uniformly bounded with respect to $y$.
We have therefore established Lemma~\ref{H1-preserve-gen} in the
special case when $I$ is the empty set.

\begin{proof}[Proof of Lemma~\ref{H1-preserve-gen}]
We need to show that the operators $\lambda_f^* \circ P_I^* \circ A_f$
are bounded from $L^2(\Omega_f)$ to $L^2(\Omega)$ and from
$H^1(\Omega_f)$ to $H^1(\Omega)$ for all $I \in \I_m$.
As in the discussion above, it is sufficient to consider each of the operators
$\lambda_f^* \circ P_I^* \circ A_{f,T}$ for all $T \in \T_f$.
However, the operator $\lambda_f^* \circ P_I^* \circ A_{f,T}$ is equal
to $\lambda_g^* \circ A_{g,T}$,
where $g= f(I)= \{ x \in f \, |\, P_I\lambda_f(x) = \lambda_f(x) \,
\}$, and as a consequence, the desired result follows from the
discussion above.
\end{proof}

\begin{proof}[Proof of Lemma~\ref{decay-create}]
Since the function $\rho_f$ is identically to one outside
$\Omega_f^e$ and the operator $\lambda_f^* \circ A_f$ is bounded in
$L^2$,
it is enough to show that
\[
\int_{\Omega_f^e} \rho_f^{-2}(x)|v(x) - A_fv(\lambda_f(x))|^2 \, dx
\le c \|\grad v \|_{0,\Omega_f^e}^2, \quad v
\in H^1(\Omega).
\]
Furthermore, it is enough to show the corresponding result for each of the
operators $A_{f,T}$, i.e., to show that
\begin{equation}\label{decay-create-T}
\int_{\Omega_f^e} \rho_f^{-2}(x)|v(x) - A_{f,T}v(\lambda_f(x))|^2 \,
dx \le c \|\grad v \|_{0,\Omega_f^e}^2, \quad v
\in H^1(\Omega),
\end{equation}
for all $T \in \T_f$.
In fact, it will actually be enough to show that
\begin{equation}\label{decay-create-T-}
\int_{\Omega_f} \rho_f^{-2}(x)|v(x) - A_{f,T}v(\lambda_f(x))|^2 \, dx
\le c \|\grad v \|_{0,\Omega_f}^2, \quad v
\in H^1(\Omega).
\end{equation}
For assume that \eqref{decay-create-T-}
has been established.
If $T' \in \T$, such that $T' \subset \Omega_f^e$, but $T' \notin
\T_f$, we let $g = f \cap T'$. On $T'$ we then have $\rho_f = \rho_g$,
$(\lambda_f)_i = (\lambda_g)_i$ if $x_i \in g$, and $(\lambda_f)_i = 0$
otherwise. In particular,   $A_{f,T}v = A_{g,T}v$ on $T'$.
From \eqref{decay-create-T-}, applied to $g$ instead of $f$, we then obtain
\begin{align*}
\int_{T'}\rho_f(x)^{-2}|v(x) - A_{f,T}v(\lambda_f(x))|^2 \,
dx
&\le \int_{\Omega_g}\rho_g(x)^{-2}|v(x) -
A_{g,T}v(\lambda_g(x))|^2 \, dx\\
& \le C \| \grad v \|_{0,\Omega_g}^2.
\end{align*}
By combining this with \eqref{decay-create-T-},
we obtain \eqref{decay-create-T}.

The rest of the proof is devoted to establishing the bound
\eqref{decay-create-T-}. In fact,
since smooth functions are dense in $H^1(\Omega_f)$, it is enough
to show \eqref{decay-create-T-} for such functions.
We start by introducing a new averaging operator $\tilde A_{f,T}$ by
\[
\tilde A_{f;T}v(\lambda) = \aint_{f^*(T)} v(G_m(\lambda,q)) \, dq =
\aint_{T} v(G_m(\lambda,q(y)) \, dy.
\]
In fact, if $n= m-1$ such that $f^*(T)$ is just a single vertex, then
$\tilde A_{f,T}v = v$. On the other hand, if $m < n-1$, then $f^*$ is
connected, and this is utilized below.
We will estimate the two terms
\[
\int_{\Omega_f} \rho_f^{-2}(x)|v(x) - \tilde A_{f,T}v(\lambda_f(x))|^2 \, dx,
\
\int_{\Omega_f} \rho_f^{-2}(x)|\tilde A_{f,T}v(\lambda_f(x))-
A_{f,T}v(\lambda_f(x))|^2 \, dx.
\]
Note that
\[
\tilde A_{f,T}v(0) = \aint_{f^*(T)} v(G_m(0,q)) \, dq.
\]
Since this operator reproduces constants on $f^*$, it follows by
Poincar\'{e}'s
inequality that
\begin{equation}\label{tilde-est-pre}
\int_{f^*} |v(q) - \tilde A_{f,T}v(0)|^2 \, dq \le c \| \grad v \|_{0,f^*}^2,
\end{equation}
for all functions $v \in H^1(f^*)$.
A scaling argument now shows that for any $\lambda \in \S_m^c$ we have
\begin{equation}\label{tilde-est}
\int_{f^*} |v(G_m(\lambda,q)) - \tilde A_{f,T}v(\lambda)|^2 \, dq
 \le c b(\lambda)^2
\| \grad v(G_m(\lambda,\cdot)) \|_{0,f^*}^2. \nonumber
\end{equation}
To see this, just introduce the function $\hat v $ defined on $f^*$ by
\[
\hat v(q) = v(G_m(\lambda,q)) \quad \text{with } \grad \hat v(q) =
b(\lambda)\grad v(G_m(\lambda,q)).
\]
Furthermore, $\tilde A_{f,T}\hat v(0) = \tilde A_{f,T}v(\lambda)$.
Therefore, the estimate \eqref{tilde-est} follows directly from
\eqref{tilde-est-pre}.
Furthermore, by using \eqref{polar} and \eqref{tilde-est} we obtain
\begin{align}\label{tilde-est-2}
\int_{\Omega_f}\rho_f(x)^{-2}|v(x) &- \tilde A_{f,T}v(\lambda_f(x))|^2 \, dx
\nonumber\\
&\le \int_{\S_m^c}b(\lambda)^{n-m-3}\int_{f^*} |v(G_m(\lambda,q)) -
\tilde A_{f,T}v(\lambda)|^2 J(f,q)\, dq \, d\lambda\\
&\le c \int_{\S_m^c}b(\lambda)^{n-m-1}\int_{f^*}|\grad
v(G_m(\lambda,q))|^2 \, dq \, d\lambda \nonumber\\
&\le c_1 \| \grad v \|_{0,\Omega_f}^2,\nonumber
\end{align}
for all $v$ such that $v(G_m(\lambda, \cdot))$ is in $H^1(f^*)$ for
all $\lambda \in \S_m^c$. In particular, this estimate holds if $v
\in H^1(\Omega_f)$ is smooth, and this is the desired estimate
for $v - \tilde A_{f,T}v$.

To complete the proof, we need a corresponding estimate for
$\tilde A_{f,T}v(\lambda_f(\cdot )) - A_{f,T}v(\lambda_f(\cdot ))$.
For any $\lambda \in \S_m^c$ we have
\begin{align*}
\tilde A_{f,T}v(\lambda) &- A_{f,T}v(\lambda) =
- \aint_{T} [ v(G_m(\lambda,q_f(y)) - v(G_m(\lambda,y))]\, dy\\
&= b(\lambda)\aint_{T} \int_0^1 \grad
v(G_m(\lambda,(1-t)q_f(y)+ t y))\cdot (y-
q(y)) \, dt \, dy.
\end{align*}
However, writing
\begin{equation*}
y = \sum_{j=0}^m \lambda_j(y) x_j + \rho_f(y) q_f(y),
\end{equation*}
it is easy to check that
\[
G_m(\lambda,(1-t)q_f(y)+ t y) = G_m(\lambda', q_f(y)),
\]
where $\lambda' = \lambda'(\lambda, t, \lambda_f(y))$ and
\[
\lambda'(\lambda,t,\mu) = \lambda + t
b(\lambda)\mu, \quad \lambda, \mu \in \S_m^c, \, t \in \R.
\]
Therefore, since $y = G_m(\lambda_f(y), q_f(y))$, we
can use \eqref{polar-T} to rewrite the representation
of $\tilde A_{f,T}v(\lambda) - A_{f}v(\lambda)$ in the form
\begin{multline*}
\tilde A_{f,T}v(\lambda) - A_{f}v(\lambda)
= \frac{b(\lambda)}{|T|}
\\
\cdot \int_0^1 \int_{\S_m^c} b(\mu)^{n-m-1}\int_{f^*(T)}
\grad v(G_m(\lambda'(\lambda, t, \mu), q)) \cdot (y-
q) J(f,q)\, dq \, d\mu \, dt,
\end{multline*}
where $\mu = \lambda_f(y)$ and $q = q_f(y)$.
Hence, it follows by Minkowski's inequality
and \eqref{polar-T} that
\begin{multline*}
\Big(\int_{\Omega_f} \rho_f^{-2}(x)(\tilde A_{f,T}v(\lambda(x)-
A_{f,T}v(\lambda(x))^2 \, dx\Big)^{1/2}\\
\le c \int_0^1\int_{\S_m^c}{b(\mu)}^{n-m-1}
\Big( \int_{\Omega_f} \int_{f^*} |\grad v(G_m(\lambda'(\lambda_f(x),t, \mu),q))|^2
dq \, dx\Big)^{1/2} d\mu \, dt \\
 \le c \int_0^1 \int_{\S_m^c}{b(\mu)}^{n-m-1} \Big( \int_{\S_m^c}
{b(\lambda)}^{n-m-1} \int_{f^*}
|\grad v(G_m(\lambda',q))|^2
dq \, d\lambda\Big)^{1/2} d\mu \, dt,
\end{multline*}
where $\lambda' = \lambda'(\lambda,t, \mu)$.
To proceed, we make the substitution $\lambda \mapsto \lambda'$.
The matrix associated to this transformation is
$I - t \mu e^T$, with determinant $b(t \mu)$. Here, as
above, $e$ is the vector with all components equal to one.
Furthermore, $b(\lambda') = b(\lambda)b(t\mu)$.
Since $b(t\mu) \ge b(\mu)$, it follows, again using
\eqref{polar}, that
\begin{multline*}
\Big (\int_{\Omega_f} \rho_f^{-2}(x)(\tilde A_{f,T}v(\lambda_f(x)-
A_{f,T}v(\lambda_f(x))^2 \, dx\Big)^{1/2}\\
\le c \int_0^1\int_{\S_m^c}\frac{{b(\mu)}^{n-m-1}}{{b(t\mu)}^{(n-m)/2}}
\Big( \int_{\S_m^c}
{b(\lambda')}^{n-m-1}\int_{f^*} \hskip-1.5pt
|\grad v(G_m(\lambda',q))|^2
dq \, d\lambda'\Big)^{1/2} d\mu \, dt\\
\le c \int_{\S_m^c}{b(\mu)}^{-1 + (n-m)/2}\Big( \int_{\S_m^c}
{b(\lambda')}^{n-m-1}\int_{f^*}
|\grad v(G_m(\lambda',q))|^2
dq \, d\lambda'\Big)^{1/2} d\mu \\
\le c \| \grad v \|_{0,\Omega_f}\int_{\S_m^c}{b(\mu)}^{-1 +(n-m)/2} \, d\mu
\le c \| \grad v \|_{0,\Omega_f}.
\end{multline*}
Together with \eqref{tilde-est-2}, this completes the proof of
\eqref{decay-create-T-} and hence the lemma is established.
\end{proof}

\begin{proof}[Proof of Lemma~\ref{decay-preserve}]
For $f \in \Delta_m(\T)$ and $I \in \I_m$, with $m<n$, we have to show
\[
\int_{\Omega}\rho_g^{-2}(x) |A_fv(P_I\lambda_f(x))|^2 \, dx \le c\,
[\int_{\Omega}\rho_g^{-2}(x) |v(x)|^2 \, dx + \| \grad v \|_{0}^2],
\]
where $g = f(I) \in \Delta(f)$.
We observe that
\[
A_fv(P_I\lambda_f) = \sum_{T \in \T_f} \frac{|T|}{|\Omega_f|}A_{g,T}(\lambda_g).
\]
However, by \eqref{decay-create-T}    we have
\[
\int_{\Omega} \rho_g^{-2}(x)|v(x) - A_{g,T} v(\lambda_g(x))|^2\, dx
\le c\, \| v \|_1^2,
\]
and by the triangle inequality this  implies that
\[
\int_{\Omega}\rho_g^{-2}(x) |A_{g,T}v(\lambda_g(x))|^2 \, dx \le c\,
[\int_{\Omega}\rho_g^{-2}(x) |v(x)|^2 \, dx + \| \grad v \|_{0}^2].
\]
The desired result follows by summing over $T \in \T_f$.
\end{proof}

\begin{proof}[Proof of Lemma~\ref{decay-cancel}]
Let $m<n$, $f= [x_0,x_1, \ldots x_m] \in \Delta_m(\T)$, $I \in \I_m$
with $0 \notin I$
and $I' = (0,I)$.
We must show that
\[
\int_{\Omega_{x_0}} \lambda_0^{-2}(x)(A_fv(P_I\lambda_f(x)) -
A_fv(P_{I'}\lambda_f(x)))^2 \, dx
\le c \|\grad v \|_{0,\Omega_f}^2, \quad v
\in H^1(\Omega_f).
\]
We recall that for any $T \in \T_f$ we have
$A_{f,T}v(P_I\lambda_f(\cdot)) = A_{g,T}v(\lambda_g(\cdot))$,
where $g = f(I) \in \Delta(f)$. Similarly,
$A_{f,T}v(P_I'\lambda_f(\cdot)) = A_{g,T}v(P\lambda_g(\cdot))$,
where $(P\lambda_g)_0 = 0$, and $(P\lambda_g)_i = (\lambda_g)_i$ for
$i \neq 0$.
The desired estimate will follow if we can show
\begin{equation}\label{cancel}
\int_{\Omega_{x_0}} \lambda_0^{-2}(x)(A_{g,T}v(\lambda_g(x)) -
A_{g,T}v(P\lambda_{g}(x)))^2 \, dx
\le c \|\grad v \|_{0,T}^2,
\end{equation}
for all $v \in H^1(T)$, $T \in \T_f$. In fact, it is enough to show
that
\begin{equation}\label{cancel-g}
\int_{\Omega_{g}} \lambda_0^{-2}(x)(A_{g,T}v(\lambda_g(x)) -
A_{g,T}v(P\lambda_{g}(x)))^2 \, dx
\le c \|\grad v \|_{0,T}^2.
\end{equation}
To see this, assume that $\hat T \in \T_{x_0}$ such that $\hat T \notin
\T_g$. Let $\hat g = g \cap \hat T$.
Then $\hat T \in \T_{\hat g}$, and $(\lambda_{\hat g})_i =
(\lambda_g)_i$
for all the components of $\lambda_g$ which are not identically
 zero on $\hat T$. Therefore \eqref{cancel-g}, applied
to $\hat g$ instead of $g$, will imply that
\[
\int_{\hat T} \lambda_0^{-2}(x)(A_{g,T}v(\lambda_g(x)) -
A_{g,T}v(P\lambda_{g}(x)))^2 \, dx
\le c \|\grad v \|_{0,T}^2.
\]
By carrying out this process for all possible $\hat T \in \Omega_{x_0} \setminus
\Omega_g$ and combining it with  \eqref{cancel-g}, we obtain
\eqref{cancel}.

The rest of the proof is devoted to establish \eqref{cancel-g}.
Without loss of generality we can assume that $g = [x_0, x_1, \ldots
,x_j]$ such that
\[
A_{g,T}v(P\lambda_{g}) = \aint_{T} v(G_j(\lambda_g,y) + \lambda_0(y-x_0))  \,
dy.
\]
We have
\begin{align*}
A_{g,T}v(P\lambda_{g}) - A_{g,T}v(\lambda_g)
&= \aint_{T} [v(G_j(\lambda,y) + \lambda_0(y-x_0)) - v(G_j(\lambda,y))] \,
dy\\
&= \lambda_0 \aint_{T} \int_0^1 \grad v(G_j(\lambda,y) + t\lambda_0(y-
x_0))\cdot (y- x_0) \, dt \, dy,
\end{align*}
where $\lambda = \lambda_g \in \S_j^c$. If we express
$y$ as $y = G_j(\mu,q)$, where
$\mu= \lambda_g(y)$ and $q = q_g(y)$, we further obtain that
\begin{align*}
G_j(\lambda,y) + t\lambda_0(y-x_0)
&= \sum_{i=0}^j\lambda_ix_i + (t\lambda_0 + b(\lambda))y - t\lambda_0x_0\\
&= \sum_{i=0}^j\lambda_ix_i + (t\lambda_0 +b(\lambda))
(\sum_{i=0}^j\mu_ix_i + b(\mu)q) - t\lambda_0x_0\\
&= \sum_{i=0}^j\lambda'_ix_i + b(\lambda')q = G_j(\lambda',q),
\end{align*}
where $\lambda' = \lambda'(\lambda,t,\mu)$ is given by
\[
\lambda'_0 = (1-t)\lambda_0 + (t\lambda_0 +b(\lambda))\mu_0
\]
and where
\[
\lambda'_i = \lambda_i + (t\lambda_0 + b(\lambda))\mu_i, \quad i>0.
\]
Using the identity \eqref{polar-T}, we therefore have
\begin{multline*}
A_{g,T}v(P\lambda_{g}) - A_{g,T}v(\lambda_g)\\
= \frac{\lambda_0}{|T|}
\int_{\S_j^c}b(\mu)^{n-j-1}
\int_0^1 \int_{g^*(T)}\grad v(G_j(\lambda',q))
\cdot (G_j(\mu,q)- x_0) \, dq \, dt \, d\mu,\\
\end{multline*}
where $\lambda' = \lambda'(\lambda,t,\mu)$ and $\lambda = \lambda_g$.
The matrix associated to the linear transformation
$\lambda \mapsto \lambda'$ is given by
\[
I - \mu e^T  + t(\mu- e_0)e_0^T = (I - \mu e^T)(I -
te_0e_0^T),
\]
with determinant $(1-t)b(\mu)$.

From Minkowski's inequality and \eqref{polar-T} we now have
\begin{multline*}
\Big(\int_{\Omega_g} \lambda_0^{-2}(x)|A_{g,T}v(P\lambda_{g}(x)) - A_{g,T}v
(\lambda_g(x))|^2
 \, dx\Big)^{1/2} \\
\le c \int_{\S_j^c}b(\mu)^{n-j-1}
\int_0^1 \Big(\int_{\Omega_f}\int_{g^*(T)}|\grad
v(G_j(\lambda'(x),q))|^2 \,
dq \, dx \Big)^{1/2}dt \, d\mu\\
\le c \int_{\S_j^c}{b(\mu)}^{n-j-1}
\int_0^1 \Big(\int_{\S_j^c}{b(\lambda)}^{n-j-1} \int_{g^*(T)} \hskip-12pt
|\grad v(G_j(\lambda',q))|^2 \, dq  \, d\lambda\Big)^{1/2}dt \, d\mu,
\end{multline*}
where $\lambda' = \lambda'(\lambda,t,\mu)$ is given above, and 
$\lambda'(x)= \lambda'(\lambda_g(x),t,\mu)$.
To proceed we make the substitution $\lambda \mapsto \lambda'$.
We note
\[
b(\lambda') = b(\lambda)b(\mu) + t\lambda_0 b(\mu)
\ge b(\lambda) b(\mu), 
\]
and that $\lambda$ can be regarded as function of $\lambda',t$ and $\mu$.
Therefore, we obtain
\begin{multline*}
\Big(\int_{\Omega_g} \lambda_0^{-2}(x)|A_{g,T}v(P\lambda_{g}(x))
- A_{g,T}v(\lambda_g(x))|^2 \, dx\Big)^{1/2} \\
\le  c \int_{\S_j^c} \int_0^1 \frac{{b(\mu)}^{n-j-3/2}}{(1-t)^{1/2}}
\Big(\int_{\S_j^c}{b(\lambda)}^{n-j-1} \int_{g^*(T)}\hskip -14pt |\grad
v(G_j(\lambda',q))|^2 \, dq
\, d\lambda'\Big)^{1/2}dt \, d\mu\\
\le c \int_{\S_j^c} \int_0^1 \frac{{b(\mu)}^{-1+ (n-j)/2}}{(1-t)^{1/2}}
\Big(\int_{\S_j^c} \hskip-8pt {b(\lambda')}^{n-j-1} \hskip -4pt
\int_{g^*(T)} \hskip -16pt
|\grad v(G_j(\lambda',q))|^2 \, dq
 \, d\lambda'\Big)^{1/2}dt \, d\mu\\
\le c \Big(\int_T | \grad v(x)|^2 \, dx \Big)^{1/2},
\end{multline*}
where \eqref{polar-T} has been used for the final inequality,
and where the integrals in $\mu$ and $t$ are easily seen to be
bounded.
This completes the proof of \eqref{cancel-g}, and hence of the lemma.
\end{proof}

\bibliographystyle{amsplain}

\bibliography{bubble-I}

\providecommand{\bysame}{\leavevmode\hbox to3em{\hrulefill}\thinspace}
\providecommand{\MR}{\relax\ifhmode\unskip\space\fi MR }
\providecommand{\MRhref}[2]{%
  \href{http://www.ams.org/mathscinet-getitem?mr=#1}{#2}
}
\providecommand{\href}[2]{#2}
\begin{thebibliography}{10}

\bibitem{acta}
Douglas~N. Arnold, Richard~S. Falk, and Ragnar Winther, \emph{Finite element
  exterior calculus, homological techniques, and applications}, Acta Numerica
  \textbf{15} (2006), 1--155.

\bibitem{decomp}
\bysame, \emph{Geometric decompositions and local bases for spaces of finite
  element differential forms}, Comput. Methods Appl. Mech. Engrg. \textbf{198}
  (2009), no.~21-26, 1660--1672. \MR{2517938 (2010b:58002)}

\bibitem{bulletin}
\bysame, \emph{Finite element exterior calculus: from {H}odge theory to
  numerical stability}, Bull. Amer. Math. Soc. (N.S.) \textbf{47} (2010),
  no.~2, 281--354. \MR{2594630 (2011f:58005)}

\bibitem{MR899702}
Ivo Babu{\v{s}}ka and Manil Suri, \emph{The optimal convergence rate of the
  {$p$}-version of the finite element method}, SIAM J. Numer. Anal. \textbf{24}
  (1987), no.~4, 750--776. \MR{899702 (88k:65102)}

\bibitem{MR2459075}
Daniele Boffi, Franco Brezzi, Leszek~F. Demkowicz, Ricardo~G. Dur{\'a}n,
  Richard~S. Falk, and Michel Fortin, \emph{Mixed finite elements,
  compatibility conditions, and applications}, Lecture Notes in Mathematics,
  vol. 1939, Springer-Verlag, Berlin, 2008, Lectures given at the C.I.M.E.
  Summer School held in Cetraro, June 26--July 1, 2006, Edited by Boffi and
  Lucia Gastaldi. \MR{2459075 (2010h:65219)}

\bibitem{MR2764424}
Daniele Boffi, Martin Costabel, Monique Dauge, Leszek Demkowicz, and Ralf
  Hiptmair, \emph{Discrete compactness for the {$p$}-version of discrete
  differential forms}, SIAM J. Numer. Anal. \textbf{49} (2011), no.~1,
  135--158. \MR{2764424 (2012a:65311)}

\bibitem{Brezzi}
Franco Brezzi, \emph{On the existence, uniqueness and approximation of
  saddle-point problems arising from {L}agrangian multipliers}, Rev. Fran\c
  caise Automat. Informat. Recherche Op\'erationnelle S\'er. Rouge \textbf{8}
  (1974), no.~R-2, 129--151. \MR{0365287 (51 \#1540)}

\bibitem{BrezziFortin}
Franco Brezzi and Michel Fortin, \emph{Mixed and hybrid finite element
  methods}, Springer Series in Computational Mathematics, vol.~15,
  Springer-Verlag, New York, 1991. \MR{1115205 (92d:65187)}

\bibitem{MR2373181}
Snorre~H. Christiansen and Ragnar Winther, \emph{Smoothed projections in finite
  element exterior calculus}, Math. Comp. \textbf{77} (2008), no.~262,
  813--829. \MR{2373181 (2009a:65310)}

\bibitem{MR2034876}
Leszek Demkowicz and Ivo Babu{\v{s}}ka, \emph{{$p$} interpolation error
  estimates for edge finite elements of variable order in two dimensions}, SIAM
  J. Numer. Anal. \textbf{41} (2003), no.~4, 1195--1208. \MR{2034876
  (2004m:65191)}

\bibitem{MR2105164}
Leszek Demkowicz and Annalisa Buffa, \emph{{$H^1$}, {$H({\rm curl})$} and
  {$H({\rm div})$}-conforming projection-based interpolation in three
  dimensions. {Q}uasi-optimal {$p$}-interpolation estimates}, Comput. Methods
  Appl. Mech. Engrg. \textbf{194} (2005), no.~2-5, 267--296. \MR{2105164
  (2005j:65139)}

\bibitem{MR2439500}
Leszek Demkowicz, Jayadeep Gopalakrishnan, and Joachim Sch{\"o}berl,
  \emph{Polynomial extension operators. {I}}, SIAM J. Numer. Anal. \textbf{46}
  (2008), no.~6, 3006--3031. \MR{2439500 (2009j:46080)}

\bibitem{MR2551195}
\bysame, \emph{Polynomial extension operators. {II}}, SIAM J. Numer. Anal.
  \textbf{47} (2009), no.~5, 3293--3324. \MR{2551195 (2010h:46043)}

\bibitem{MR2904580}
\bysame, \emph{Polynomial extension operators. {P}art {III}}, Math. Comp.
  \textbf{81} (2012), no.~279, 1289--1326. \MR{2904580}

\bibitem{local-cochain}
Richard~S. Falk and Ragnar Winther, \emph{Local bounded cochain projections},
  http://arxiv.org/abs/1211.5893 (2013), To appear in Math. Comp.

\bibitem{hiptmair-2009}
Ralf Hiptmair, \emph{Discrete compactness for the {$p$}-version of tetrahedral
  edge elements}, http://arxiv.org/abs/0901.0761 (2009).

\bibitem{MR1445738}
Rafael Mu{\~n}oz-Sola, \emph{Polynomial liftings on a tetrahedron and
  applications to the {$h$}-{$p$} version of the finite element method in three
  dimensions}, SIAM J. Numer. Anal. \textbf{34} (1997), no.~1, 282--314.
  \MR{1445738 (98k:65069)}

\bibitem{schoberl05}
Joachim Sch{\"o}berl, \emph{A posteriori error estimates for {M}axwell
  equations}, Math. Comp. \textbf{77} (2008), no.~262, 633--649. \MR{MR2373173
  (2008m:78017)}

\end{thebibliography}

\end{document}